\newtheorem{thm}{Theorem}[section]
\newtheorem{lem}[thm]{Lemma}
\newtheorem{rem}[thm]{Remark}
\newtheorem{prop}[thm]{Proposition}
\newtheorem{cor}[thm]{Corollary}
\newtheorem{example}[thm]{Example}
\newdefinition{defn}[thm]{Definition}
\journal{}
\begin{document}
	\begin{frontmatter}
	
\title{ Designing optimal dual frames for $\ell^p-$average error optimization}
        \author{Shankhadeep Mondal}
		\ead{shankhadeep.mondal@ucf.edu}
        \author{Deguang Han}
		\ead{deguang.han@ucf.edu}
		\author{R. N. Mohapatra}
		\ead{Ram.Mohapatra@ucf.edu}

		\address{Department of Mathematics, University of Central Florida}
		\cortext[]{}		
		
		\begin{abstract}
		In this paper, we investigates the problem of optimal dual frame selection for signal reconstruction in the presence of erasures. Unlike traditional approaches relying on left inverses, we evaluate performance through the norms of error operators, using the Frobenius norm, spectral radius, and numerical radius as measures. Our central focus is the characterization of dual frames that minimize the $\ell^p-$average under these error operator measurements over all possible erasure patterns. We provide conditions under which the canonical dual frame is uniquely optimal and extend our results to multiple erasures. In the Frobenius norm case, we offer a complete characterization for any number of erasures in uniform tight frames. The paper also examines interconnections between optimality criteria across different norm measures and gives sufficient conditions ensuring uniqueness of the optimal dual.

		\end{abstract}

		\begin{keyword}
			Erasures, Frames, Optimal dual frame,  Erasures.
			\MSC[2010] 42C15, 47B02, 94A12
		\end{keyword}
		
	\end{frontmatter}
	
	\section{Introduction}
	
	The theory of frames has emerged as a powerful mathematical framework for robust signal representation, with wide-ranging applications in signal processing, wireless communications, and data transmission. One of the most compelling features of frames is their inherent redundancy, which provides resilience against the loss of information which is commonly referred to as erasures during transmission. When part of the encoded frame coefficients are lost due to adverse network conditions or channel imperfections, the redundant structure of frames allows for accurate reconstruction of the original signal, provided that an appropriate dual frame is chosen.

A fundamental problem in this context is the selection of an optimal dual frame that minimizes the reconstruction error introduced by such erasures. Various norms of the associated error operators have been employed to quantify this reconstruction error \cite{peh, shan4, jerr, jins}. Classical works have addressed the operator norm-based optimization of dual frames, notably for Parseval frames or for specific classes of erasures\cite{holm}. However, a comprehensive treatment of different operator-induced measurements in the broader setting of general frames remains relatively unexplored.

In recent years, extensive research has been devoted to addressing erasure problems through the lens of frame theory. Casazza and Kovačević \cite{casa2} explored equal norm tight frames in depth, analyzing their structure, construction, and robustness against erasures. Goyal, Kovačević, and Kelner \cite{goya} examined uniform tight frames from a coding-theoretic perspective, showing that they are optimal for handling single erasures. Holmes and Paulsen \cite{holm} introduced an operator norm-based framework to assess the optimality of dual frames, particularly focusing on Parseval frames and their canonical duals, and identified conditions under which the canonical dual minimizes the worst-case reconstruction error. This line of work was extended in \cite{jerr, jins}, where the uniqueness and topological properties of optimal duals were studied, especially under multiple erasure settings.

Alternative approaches have also emerged. Bodmann and Paulsen \cite{bodm1} proposed an average-case measure for reconstruction error based on operator norms, while Pehlivan, Han, and Mohapatra \cite{sali} investigated the spectral radius as an error metric for one erasure, with further developments for two erasures in \cite{peh, dev}. The numerical radius was analyzed in \cite{ara}, and an averaged combination of operator norm and numerical radius in \cite{deep}. While our work adopts a similar average-type measurement as in \cite{bodm1}, it significantly generalizes the setting by considering arbitrary frames and their duals, rather than restricting to Parseval frames and its canonical duals. Related investigations using probabilistic models and weight sequences were conducted in \cite{leng3, li1}, while broader studies on optimal dual pairs, including Parseval and general frames can be found in \cite{leng, li, shan}. In recent years, several generalized frame structures have been developed, among which \(K-\)frames introduced by G\u{a}vru\c{t}a \cite{gua} have gained particular attention due to their role in reconstruction within \(\mathrm{Range}(K)\). Structural and constructive results on \(K-\)frames and their duals appear in \cite{du,he,li4,bar,wang,nami,li3,pill,xiao,koba}, while erasure problems for \(K-\)frames are addressed in \cite{miao,he}. These studies lay the groundwork for further developments in designing optimal duals in the presence of erasure and reconstruction challenges.

\par In this paper, we investigate the problem of optimal dual frame selection for a given finite frame, under erasure-induced error, using a measurement that is structurally different from the traditional min–max formulations in the literature. More precisely, we consider the 
$\ell^p$-average of the norm of the error operators over all possible $m-$element erasure patterns. This formulation offers a natural probabilistic interpretation and avoids the over-conservatism of worst-case models. For instance,  rather than minimizing the \emph{worst-case error measure}
\[
\max \bigg\{ \mathcal{M}\big( T_G^* D T_F \big) : D \in \mathcal{D}_m \bigg\},
\]
where $\mathcal{M}$ denotes a chosen measure of the error operator and 
$\mathcal{D}_m$ is the collection of diagonal matrices with exactly $m$ ones 
(indicating the retained coefficients) and $N-m$ zeros on the main diagonal, as considered by Bodmann and Paulsen~\cite{bodm1}, 
we instead focus on minimizing the \emph{$\ell^p$-average error}, given by
\[
\left( \frac{1}{\binom{N}{m}} \sum_{D \in \mathcal{D}_m} 
\big[ \mathcal{M} \big( T_G^* D T_F \big) \big]^p \right)^{\!\frac{1}{p}},
\]
for $p>1$. Where $G$ is a dual frame of $F$ and $\mu$ is one of the error operator measures, namely the Frobenius norm, spectral radius, or numerical radius. Unlike previous studies that often restrict to Parseval frames and their canonical duals, our framework applies to general frames and arbitrary duals, significantly broadening the scope of the analysis.

The structure of the paper is as follows: Section 2 introduces the necessary background on frames, duals, error operators, and matrix norms. Section 3 formalizes the average case measurement model and error formulation. Section 4 addresses the Frobenius norm and provides complete characterizations of optimal dual frames for any $m-$erasure scenario. Section 5 investigates the spectral radius as the error measure, and Section 6 focuses on the numerical radius. In Section 7, we study the relationships between optimal duals under the different norm measures and analyze their possible intersections and topological structure.

This work contributes to the ongoing development of robust frame theory by extending optimal dual frame selection beyond canonical settings and introducing average-based, norm-sensitive optimization criteria with both theoretical and practical significance.
	
	\section{Preliminaries on erasure }
	
Throughout this article, we consider $\mathcal{H}_n$ to be an $n$-dimensional Hilbert space. A finite collection of vectors $F = \{f_i\}_{i=1}^N$ in $\mathcal{H}_n$ is called a \textit{frame} for $\mathcal{H}_n$ if there exist constants $A, B > 0$ such that
\[
A' \|f\|^2 \leq \sum_{i=1}^N |\langle f, f_i \rangle|^2 \leq B' \|f\|^2, \quad \text{for all } f \in \mathcal{H}_n.
\]
The constants $A'$ and $B'$ are referred to as the \textit{lower} and \textit{upper frame bounds}, respectively. The \textit{optimal lower frame bound} is the supremum of all possible lower bounds, and the \textit{optimal upper frame bound} is the infimum over all upper bounds. A frame $\{f_i\}_{i=1}^N$ is called \textit{tight} if $A' = B'$, i.e., $\sum_{i=1}^N |\langle f, f_i \rangle|^2 = A' \|f\|^2, \quad \text{for all } f \in \mathcal{H}_n.$ In particular, if $A' = B' = 1$, then the frame is said to be a \textit{Parseval frame}.

 Given a frame $F$ for $\mathcal{H}_n,$ the \textit{analysis operator} $T_F : \mathcal{H}_n \to \mathbb{C}^N$ is defined by
\[
T_F(f) = \{\langle f, f_i \rangle\}_{i=1}^N,
\]
and its adjoint $T_F^* : \mathbb{C}^N \to \mathcal{H}_n$, called the \textit{synthesis} or \textit{pre-frame operator}, is given by
\[
T_F^*(\{c_i\}_{i=1}^N) = \sum_{i=1}^N c_i f_i.
\]
The \textit{frame operator} $S_F : \mathcal{H}_n \to \mathcal{H}_n$ is defined by
\[
S_F f = T_F^* T_F f = \sum_{i=1}^N \langle f, f_i \rangle f_i.
\]
It is well known that $S_F$ is a positive, self-adjoint, and invertible operator.

A sequence $G = \{g_i\}_{i=1}^N$ in $\mathcal{H}_n$ is called a \textit{dual frame} of $F$ if every $f \in \mathcal{H}_n$ admits the reconstruction
\begin{equation}\label{eqn2point1reconstruction}
    f = \sum_{i=1}^N \langle f, f_i \rangle g_i, \quad \text{for all } f \in \mathcal{H}_n,
\end{equation}
which implies that $T_G^* T_F = I$. This further leads to the identity
\[
f = \sum_{i=1}^N \langle f, g_i \rangle f_i, \quad \text{for all } f \in \mathcal{H}_n.
\]
Therefore, \( G \) is a dual of \( F \) if and only if \( F \) is a dual of \( G \). A pair \( (F, G) \), where each frame contains \( N \) elements, is referred to as an \textit{\((N, n)\)-dual pair} in the \( n \)-dimensional Hilbert space \( \mathcal{H}_n \). It is well known that for any frame \( F = \{f_i\}_{i=1}^N \), the collection \( \{S_F^{-1} f_i\}_{i=1}^N \) forms a dual frame, commonly called the \textit{canonical} or \textit{standard dual}. When \( F \) is a basis, this canonical dual is the only dual of $F$. In contrast, if \( F \) is not a basis, infinitely many dual frames exist. In fact, every dual frame \( G = \{g_i\}_{i=1}^N \) of \( F \) can be represented as  $g_i = S_F^{-1} f_i + u_i, \quad \text{where } \{u_i\}_{i=1}^N \subset \mathcal{H}_n,$ subject to the constraint  
\[
\sum_{i=1}^N \langle f, u_i \rangle f_i = \sum_{i=1}^N \langle f, f_i \rangle u_i = 0, \quad \forall\, f \in \mathcal{H}_n.
\]

\noindent
An additional property of a dual pair \( (F, G) \) is the relation  $\sum_{i=1}^N \langle f_i, g_i \rangle = \operatorname{tr}(T_G T_F^*) = \operatorname{tr}(T_F^* T_G) = \operatorname{tr}(I) = n.$
For a comprehensive treatment of frame theory, see \cite{ole}.

   The idea of viewing frames as coding mechanisms begins by considering a vector 
\( f \in \mathcal{H}_n \) and an \((N,n)\)-frame \( F \) with analysis operator \( T_F \). 
The vector \( T_F f \) serves as an encoded representation of \( f \), 
which is transmitted to a receiver and subsequently decoded using \( T_G^* \), 
where \( G \) is a dual frame of \( F \).  During transmission, some frame coefficients in the reconstruction process may be lost,  corrupted or delayed, requiring the reconstruction of \( f \) from the available coefficients. 
If certain coefficients are erased, the received signal can be modeled as
\[
T_G^* (I_{N \times N} - D) T_F f,
\]
where \( D \) is a diagonal matrix with \( m \) ones and \( N-m \) zeros, 
and the ones in \( D \) indicate the locations of the missing coefficients of 
\( T_G^* T_F f \). Assuming erasures occur in exactly \( m \) positions, let 
\( \Lambda \subseteq \{1,2,\dots,N\} \) with \( |\Lambda| = m \) denote the set of erased indices. 
The associated \emph{error operator} is then defined as
\[
E_{\Lambda} f := T_G^* D T_F f 
= \sum_{i \in \Lambda} \langle f, f_i \rangle g_i,
\]
where \( D \) is the diagonal matrix with entries 
\( d_{ii} = 1 \) for \( i \in \Lambda \) and \( d_{ii} = 0 \) otherwise. 
This operator quantifies the reconstruction error introduced by the erased components.

   To assess the error across all possible $m$-erasure patterns, we consider 
\[
\left\{\frac{1}{\binom{N}{m}}\sum\limits_{D \in \mathcal{D}_m }\left(\mathcal{M} \left(T_G^* D T_F \right)\right)\right\}^{\dfrac{1}{p}},
\]
as error measurement, where $\mathcal{M}$ denotes a suitable measure of the operator (e.g., spectral radius or operator norm), and $\mathcal{D}_m$ is the collection of all such diagonal matrices $D$ corresponding to erasures in exactly $m$ locations i.e., having $m$ entries equal to $1$ and $N - m$ entries equal to $0$ on the diagonal.

    \begin{defn}
For a frame \( F = \{f_i\}_{i=1}^N \) in \( \mathcal{H}_n \), 
a dual frame \( G = \{g_i\}_{i=1}^N \)  is said to be a \emph{\(1-\)uniform} dual of $F$
if \( \langle f_i, g_i \rangle \) are identical for all \( i \). 
Equivalently, there exists a constant \( c \in \mathbb{C} \) such that $\langle f_i, g_i \rangle = c \quad \text{for all } 1 \leq i \leq N.$
\end{defn}

\begin{rem}\label{rem3point2}
	It is worth noting that if $G$ is a $1-$uniform dual of $F$, then the constant inner product value $c$ turns out to be $\frac{n}{N}$. This follows from the relation $n = \sum_{i=1}^N \langle f_i, g_i \rangle = cN.$
\end{rem}

	\begin{defn}
Let \( F = \{f_i\}_{i=1}^N \) be a frame for \( \mathcal{H}_n \), and let \( G = \{g_i\}_{i=1}^N \) be a \(1-\)uniform dual of \( F \). 
We say that \( G \) is a \emph{\(2-\)uniform dual} of \( F \) if there exists a constant \( c' \in \mathbb{C} \) such that $\langle f_i, g_j \rangle \, \langle f_j, g_i \rangle = c'\quad \text{for all } 1 \leq i \neq j \leq N.$
\end{defn}
\vspace{0.5cm}

\begin{prop}\label{thm2point4minq}\cite{mapa}
Let \( a > 0 \) with \( a \neq 1 \), and let \( m \in \mathbb{Q} \). Then
\[
a^m - 1 
\begin{cases}
> m(a-1), & \text{if } m \notin (0,1), \\[6pt]
< m(a-1), & \text{if } m \in (0,1).
\end{cases}
\]
\end{prop}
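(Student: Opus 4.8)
The plan is to recognise the statement as a strict Bernoulli-type inequality and to deduce it from the strict convexity/concavity of the power function. I would set $\phi(x)=x^{m}$ on $(0,\infty)$ and note that $\phi''(x)=m(m-1)x^{m-2}$ has the same sign as $m(m-1)$; hence $\phi$ is strictly convex when $m\notin[0,1]$ and strictly concave when $m\in(0,1)$. Since $\phi(1)=1$ and $\phi'(1)=m$, the tangent line at $x=1$ is $\ell(x)=1+m(x-1)$, and the strict tangent-line characterisation of convexity gives $x^{m}>1+m(x-1)$ for all $x\in(0,\infty)\setminus\{1\}$ when $m\notin[0,1]$, with the reverse strict inequality when $m\in(0,1)$. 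Putting $x=a$ yields both alternatives.

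Since the hypothesis only assumes $m\in\mathbb{Q}$, I would also give a proof using nothing beyond the arithmetic--geometric mean inequality for finitely many nonnegative numbers, thereby avoiding real-exponent power functions altogether. Write $t=a-1\in(-1,\infty)\setminus\{0\}$ and $m=p/q$ in lowest terms with $q\in\mathbb{N}$. For $0<m<1$ (so $0<p<q$), AM--GM applied to $p$ copies of $1+t$ and $q-p$ copies of $1$ has arithmetic mean $1+mt$ and geometric mean $(1+t)^{m}$, giving $(1+t)^{m}\le 1+mt$. For $m>1$ (so $p>q$) I would first dispose of the case $1+mt<0$ (where $(1+t)^{m}>0>1+mt$ is immediate) and otherwise apply AM--GM to $q$ copies of $1+mt$ and $p-q$ copies of $1$, which after raising to the power $p/q$ yields $1+mt\le(1+t)^{m}$. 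For $m<0$, writing $m=-u/v$ with $u,v\in\mathbb{N}$, I would again treat $1+mt\le 0$ trivially and otherwise apply AM--GM to $v$ copies of $1+mt$ and $u$ copies of $1+t$, whose arithmetic mean is $1$, obtaining $(1+mt)^{v}(1+t)^{u}\le 1$ and hence $1+mt\le(1+t)^{m}$. In every case equality in AM--GM forces the listed numbers to coincide, i.e.\ $t=0$, so the inequalities are strict whenever $a\neq1$.

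I do not expect a serious obstacle: the difficulty is organisational rather than conceptual. For the convexity argument the only care needed is in pinning down which values of $m$ make $\phi$ convex versus concave and in invoking the \emph{strict} version of the tangent-line inequality. For the AM--GM argument the delicate points are choosing the right configuration of copies in each of the three regimes $m<0$, $0<m<1$, $m>1$, and separating off the case where $1+m(a-1)$ is negative — which can occur only for $m<0$ or sufficiently large $m>1$, and where the algebraic manipulations would otherwise be illegitimate. It is also worth recording that the endpoints $m=0$ and $m=1$ give equality, so strictness in the first alternative really requires $m\notin[0,1]$ rather than merely $m\notin(0,1)$.
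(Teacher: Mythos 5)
Your proposal is correct, and both of your arguments go through. Note, however, that the paper does not prove this proposition at all: it is quoted verbatim from the reference \cite{mapa} (a classical higher-algebra textbook), where the standard treatment is an elementary algebraic argument for rational exponents, essentially of the same flavour as your second, AM--GM-based proof (no calculus, reduction to $m=p/q$ and a finite mean inequality). So your second argument reconstructs what the cited source does, while your first argument (strict convexity of $x\mapsto x^{m}$ and the tangent line at $x=1$) is the quicker modern route, at the cost of invoking differentiability of real-power functions, which the rational-exponent hypothesis is precisely designed to avoid. I checked the delicate points of your AM--GM version and they are handled properly: in each regime the arithmetic mean comes out as $1+mt$ (or $1+t$, or $1$) exactly because $m=p/q$, the case $1+m(a-1)\le 0$ is split off where the manipulations would fail, and the equality case of AM--GM forces $t=a-1=0$, giving strictness. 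Your closing observation is also a genuine (if harmless) correction to the statement as printed: at $m=0$ and $m=1$ one has equality, so the strict inequality in the first alternative holds for $m\notin[0,1]$, not merely $m\notin(0,1)$; since the proposition is not invoked elsewhere in the paper at these endpoint values, this does not affect anything downstream, but it is worth recording.
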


 \begin{prop}\cite{ole} \label{olethm1}
  Let $\{f_k \}_{i=1}^N$ be a frame for $n$ dimensional Hilbert space $\mathcal{H}_n$. Let $ f \in \mathcal{H}_n$. If $f$ has a representation $ f = \sum\limits_{i=1}^N c_i f_i$, for some coefficiant $\{c_i\}$, then \\
  \begin{equation} \label{oleeqn}
     \sum_{i=1}^N |c_i|^2 = \sum_{i=1}^N |\langle f, S_F^{-1}f_i \rangle |^2 + \sum_{i=1}^N |c_i - \langle f, S_F^{-1}f_i \rangle |^2  .
       \end{equation}  
  
  \end{prop}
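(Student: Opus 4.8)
The plan is to read the claimed identity as a Pythagorean decomposition in $\mathbb{C}^N$: the coefficient vector $c=(c_i)_{i=1}^N$ splits into its canonical-dual part and a residual that lies in the kernel of the synthesis operator $T_F^*$, and these two pieces are orthogonal.

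First I would set $d_i := \langle f, S_F^{-1}f_i\rangle$ for $1\le i\le N$, so that, with $d=(d_i)_{i=1}^N$, we have $d = T_F S_F^{-1}f$. Since $\{S_F^{-1}f_i\}_{i=1}^N$ is a dual frame of $F$, the reconstruction formula \eqref{eqn2point1reconstruction} gives $T_F^*d = S_F S_F^{-1}f = f$; and by hypothesis $T_F^*c = \sum_{i=1}^N c_i f_i = f$ as well. Subtracting, $c-d\in\ker T_F^*$. On the other hand $d = T_F(S_F^{-1}f)\in\operatorname{Range}(T_F)$, and in the finite-dimensional space $\mathbb{C}^N$ one has the fundamental-subspace relation $\operatorname{Range}(T_F) = (\ker T_F^*)^{\perp}$. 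Hence $d\perp(c-d)$ in $\mathbb{C}^N$.

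Now writing $c = d + (c-d)$ and expanding the squared norm, $\|c\|^2 = \|d\|^2 + \|c-d\|^2 + 2\operatorname{Re}\langle d,\,c-d\rangle$, the cross term vanishes by the orthogonality just established, and this is precisely \eqref{oleeqn}. If one prefers to avoid invoking the subspace relation, the cross term can instead be killed by a direct computation: using self-adjointness of $S_F^{-1}$ and the hypothesis $\sum_{i=1}^N(c_i-d_i)f_i = f-f = 0$, one gets $\sum_{i=1}^N (c_i-d_i)\overline{d_i} = \big\langle \sum_{i=1}^N (c_i-d_i)S_F^{-1}f_i,\ f\big\rangle = \big\langle S_F^{-1}\big(\sum_{i=1}^N (c_i-d_i)f_i\big),\ f\big\rangle = \langle S_F^{-1}0,\,f\rangle = 0$.

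There is no genuine obstacle in this argument; it is a short orthogonality fact. The only places that require a little care are the bookkeeping of complex conjugates inside the Hilbert-space inner product (so that $\overline{d_i} = \langle S_F^{-1}f_i, f\rangle$ is used with the correct slot), and the identification $\operatorname{Range}(T_F) = (\ker T_F^*)^{\perp}$, which is just the standard orthogonal-complement relation for the adjoint of a linear map between finite-dimensional inner-product spaces applied to $T_F^*:\mathbb{C}^N\to\mathcal{H}_n$.
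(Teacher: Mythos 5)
Your proof is correct; the paper itself states this proposition without proof (citing \cite{ole}), and your argument is essentially the standard one from that reference: decompose $c_i = \langle f, S_F^{-1}f_i\rangle + (c_i - \langle f, S_F^{-1}f_i\rangle)$ and kill the cross term via $\sum_i (c_i - \langle f, S_F^{-1}f_i\rangle)f_i = 0$ together with self-adjointness of $S_F^{-1}$. Nothing further is needed.
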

  
  \begin{prop}\label{}
  Let $G $ be a $1-$uniform dual of $F$. Then,
  $\sum\limits_{i \neq j} | \langle f_i , g_j \rangle |^2 \geq n- \frac{n^2}{N}$ . 
 
  \end{prop}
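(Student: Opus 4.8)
\emph{Proof proposal.} The plan is to split the full double sum and exploit that the $1$-uniform hypothesis completely pins down the diagonal. Write
\[
\sum_{i,j=1}^{N}|\langle f_i,g_j\rangle|^{2}
=\sum_{i=1}^{N}|\langle f_i,g_i\rangle|^{2}
+\sum_{i\neq j}|\langle f_i,g_j\rangle|^{2}.
\]
By Remark~\ref{rem3point2}, since $G$ is a $1-$uniform dual of $F$ we have $\langle f_i,g_i\rangle=\frac{n}{N}$ for every $i$, so the diagonal term equals $N\cdot\big(\frac{n}{N}\big)^{2}=\frac{n^{2}}{N}$. Hence it suffices to prove the clean inequality $\sum_{i,j}|\langle f_i,g_j\rangle|^{2}\ge n$, and the claim follows by subtraction.

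To establish $\sum_{i,j}|\langle f_i,g_j\rangle|^{2}\ge n$, fix $i$ and apply the reconstruction identity $f=\sum_{k}\langle f,g_k\rangle f_k$ with $f=f_i$, which shows that $\{c_k\}_{k}:=\{\langle f_i,g_k\rangle\}_{k}$ is an admissible coefficient sequence representing $f_i$ in the frame $F$. Proposition~\ref{olethm1} then yields
\[
\sum_{k=1}^{N}|\langle f_i,g_k\rangle|^{2}
\;\ge\;\sum_{k=1}^{N}|\langle f_i,S_F^{-1}f_k\rangle|^{2}
\;=\;\langle f_i,S_F^{-1}f_i\rangle,
\]
where the last equality uses that the canonical dual $\{S_F^{-1}f_k\}$ has frame operator $S_F^{-1}$, so its analysis operator $\Theta$ satisfies $\Theta^{*}\Theta=S_F^{-1}$ and $\sum_k|\langle f_i,S_F^{-1}f_k\rangle|^{2}=\|\Theta f_i\|^{2}=\langle S_F^{-1}f_i,f_i\rangle$. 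Summing over $i$ and using $\sum_i f_i\otimes f_i=S_F$ gives
\[
\sum_{i,j=1}^{N}|\langle f_i,g_j\rangle|^{2}
\;\ge\;\sum_{i=1}^{N}\langle f_i,S_F^{-1}f_i\rangle
\;=\;\operatorname{tr}\!\big(S_F^{-1}S_F\big)=\operatorname{tr}(I_n)=n,
\]
which completes the argument.

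The only real content is the two computations in the previous paragraph: that the canonical-dual coefficient energies, summed over the frame index, collapse to $n$, and that $1$-uniformity forces every diagonal inner product to be $\frac{n}{N}$. An alternative route avoiding Proposition~\ref{olethm1} is to note that $P:=T_F T_G^{*}$ is an $N\times N$ idempotent (since $T_G^{*}T_F=I_n$) with $\operatorname{tr}(P)=\operatorname{tr}(T_G^{*}T_F)=n$; hence $P$ has exactly $n$ eigenvalues equal to $1$, and Schur's majorization of eigenvalues by singular values gives $\|P\|_F^{2}=\sum_j\sigma_j(P)^{2}\ge\sum_j|\lambda_j(P)|^{2}=n$. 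Since $|P_{ij}|=|\langle f_i,g_j\rangle|$ and $P_{ii}=\overline{\langle f_i,g_i\rangle}=\frac{n}{N}$, one obtains the same bound. I expect no serious obstacle beyond carefully recording these identities; if one also wanted the equality case, that would amount to characterizing when the estimates above are tight, i.e.\ when $G$ is the canonical dual (equivalently when $P$ is an orthogonal projection).
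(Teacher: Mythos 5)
Your main argument is correct and is essentially the paper's own proof: both apply Proposition~\ref{olethm1} to the representation $f_i=\sum_{j}\langle f_i,g_j\rangle f_j$, sum over $i$ to obtain $\sum_{i,j}|\langle f_i,g_j\rangle|^{2}\ge\sum_{i}\langle f_i,S_F^{-1}f_i\rangle=n$, and then use the $1$-uniformity value $\langle f_i,g_i\rangle=\frac{n}{N}$ (Remark~\ref{rem3point2}) to strip off the diagonal; the only difference is that you split the diagonal at the start rather than at the end. Your alternative sketch, however, is a genuinely different route: since $T_G^{*}T_F=I$, the matrix $P=T_FT_G^{*}$ with entries $P_{ij}=\overline{\langle f_i,g_j\rangle}$ is idempotent with $\operatorname{tr}(P)=n$, and Schur's inequality $\sum_{j}|\lambda_j(P)|^{2}\le\|P\|_{\mathfrak{F}}^{2}$ yields $\sum_{i,j}|\langle f_i,g_j\rangle|^{2}\ge n$ without invoking Proposition~\ref{olethm1}; this version also makes the equality case transparent (equality forces $P$ to be an orthogonal projection, i.e.\ $G$ the canonical dual), at the cost of importing singular-value majorization in place of the elementary frame-coefficient identity the paper uses.
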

 \begin{proof}
   As $G$ is a $1-$uniform dual of $F$, we have $f_i= \sum\limits_{j=1}^N \left\langle f_i , g_j \right\rangle f_{j} ,\;\;\;\forall 1\leq i \leq N.$
   By Theorem \ref{olethm1}, we have
   $$ \sum_{j=1}^N |\langle f_i , g_j \rangle|^2 \geq \sum_{j=1}^N |\langle f_i , S_F^{-1}f_j \rangle|^2 ,\;\;\; 1 \leq i \leq N .$$
   Summing over $1 \leq i \leq N,$ we have,  
\begin{align} \label{eqn2point3}
& \sum_{i=1}^N \sum_{j=1}^N \bigg|\left\langle f_i , g_j \right\rangle \bigg|^2 \geq  \sum_{i=1}^N\sum_{j=1}^N \bigg|\left\langle f_i , S_F^{-1}f_j \right\rangle \bigg|^2   =  \sum_{i=1}^N\left( \sum_{j=1}^N \left\langle f_i , S_F^{-1}f_j \right\rangle \left\langle S_F^{-1}f_j , f_i  \right\rangle \right)\nonumber \\&= \sum_{i=1}^N\left( \left\langle  \sum_{j=1}^N \langle f_i , S_F^{-1}f_j \rangle f_j , S_F^{-1}f_i \right\rangle  \right)  = \sum_{i=1}^N \langle f_i , S_F^{-1}f_i \rangle  = n.
\end{align} 
 Rewriting the above inequality we obtain ,
\begin{equation}\label{eqn2point4}
\sum_{i=1}^N |\langle f_i , g_i \rangle |^2 + \sum_{i \neq j} |\langle f_i , g_j \rangle |^2 \geq n . 
\end{equation} 

 As $(F,G)$ is a $1-$uniform dual pair,  substituting $\langle f_i, g_i \rangle  = \frac{n}{N},\;1\leq i \leq N $ in \eqref{eqn2point4}
we obtain  $\sum\limits_{i \neq j} | \langle f_i , g_j \rangle |^2 \geq n- \frac{n^2}{N}$ .\\

  \end{proof}

\begin{thm}\label{lemma2point4}\cite{mondal}
Let \( a_1, a_2, \ldots, a_s \) be positive real numbers, and let \( q_1, q_2, \ldots, q_s \) be positive real weights. Then for any rational number \( m \in (0,1) \), 
\[
\frac{q_1 a_1^m + q_2 a_2^m + \cdots + q_s a_s^m}{q_1 + q_2 + \cdots + q_s}
\begin{cases}
\geq \left( \dfrac{q_1 a_1 + q_2 a_2 + \cdots + q_s a_s}{q_1 + q_2 + \cdots + q_s} \right)^m, & \text{if } m \notin (0,1), \\~\\
\leq \left( \dfrac{q_1 a_1 + q_2 a_2 + \cdots + q_s a_s}{q_1 + q_2 + \cdots + q_s} \right)^m, & \text{if } m \in (0,1),
\end{cases}
\]
with equality if and only if \( a_1 = a_2 = \cdots = a_s \).
\end{thm}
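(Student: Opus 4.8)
This is, in essence, Jensen's inequality for the power function $x\mapsto x^m$, but the plan is to obtain it purely algebraically from the pointwise ``tangent-line'' estimate of Proposition~\ref{thm2point4minq}, so that the argument stays valid for rational exponents with no appeal to convexity or calculus. Set $Q=\sum_{k=1}^s q_k>0$ and let $A=\tfrac1Q\sum_{k=1}^s q_k a_k$ be the weighted arithmetic mean of the $a_k$; since all $q_k,a_k>0$ we have $A>0$, and $A$ is rational-exponent-friendly because we only ever raise it to the (rational) power $m$.

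The main step is a normalized application of Proposition~\ref{thm2point4minq}. For each index $k$ with $a_k\neq A$, apply it with $a=a_k/A>0$, $a\neq1$. In the regime $m\in(0,1)$ this yields $(a_k/A)^m-1<m(a_k/A-1)$, i.e.\ $a_k^m< A^m+mA^{m-1}(a_k-A)$; for the indices with $a_k=A$ this same relation holds with equality, trivially. Multiplying by $q_k>0$ and summing over $k=1,\dots,s$, the linear correction term cancels:
\[
\sum_{k=1}^s q_k a_k^m\;\le\;A^m Q+mA^{m-1}\Bigl(\textstyle\sum_{k}q_k a_k-AQ\Bigr)=A^m Q,
\]
because $\sum_k q_k a_k=AQ$ by the definition of $A$. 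Dividing by $Q$ gives $\frac{\sum q_k a_k^m}{Q}\le A^m=\bigl(\frac{\sum q_k a_k}{Q}\bigr)^m$, which is the asserted inequality for $m\in(0,1)$. The case $m\notin(0,1)$ is handled identically, the only change being that Proposition~\ref{thm2point4minq} now reverses the pointwise inequality; every subsequent step reverses accordingly and produces $\frac{\sum q_k a_k^m}{Q}\ge A^m$.

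For the equality characterization I would track tightness term by term: in the summed estimate above the contribution of index $k$ is an equality exactly when $a_k=A$, which is precisely the $a=1$ borderline of Proposition~\ref{thm2point4minq}, and is strict otherwise. Since a sum of terms all of one sign vanishes iff each term vanishes, equality in the theorem forces $a_k=A$ for every $k$, hence $a_1=a_2=\cdots=a_s$; conversely, if all the $a_k$ agree then both sides of the inequality equal that common value to the $m$-th power. I do not expect a genuine obstacle here: the only care required is the mild bookkeeping of separating out the indices with $a_k=A$ so that the hypothesis $a\neq1$ of Proposition~\ref{thm2point4minq} is respected, and keeping the direction of the inequality straight between the two regimes of $m$.
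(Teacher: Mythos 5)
Your argument is correct. The paper itself does not prove this statement --- it is imported from \cite{mondal} as a black box --- so there is no internal proof to compare against; but your tangent-line derivation from Proposition~\ref{thm2point4minq} is sound and has the virtue of using only tools already stated in the paper. The normalization $a=a_k/A$, the cancellation of the linear term $mA^{m-1}\bigl(\sum_k q_k a_k - AQ\bigr)$ via the definition of $A$, and the term-by-term tightness analysis (strict for every $k$ with $a_k\neq A$, hence equality in the sum forces $a_k=A$ for all $k$) are all handled properly, including the separation of the indices with $a_k=A$ so that the hypothesis $a\neq 1$ of Proposition~\ref{thm2point4minq} is respected. The only caveat worth flagging is one you inherit from the statement itself rather than introduce: for the degenerate rational exponents $m\in\{0,1\}$, which formally fall under ``$m\notin(0,1)$'', the pointwise estimate holds with equality for every $a$, so the ``equality iff $a_1=\cdots=a_s$'' clause fails there; both Proposition~\ref{thm2point4minq} and the theorem should be read as excluding $m=0,1$ (as is standard for this Bernoulli-type inequality), and with that reading your proof is complete.
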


\section{Optimal dual frames under Frobenius norm}

In this section, we study the problem of identifying dual frames that minimize the average reconstruction error when a single erasure occurs, using the \textit{Frobenius norm} as the measurement of the associated error operator. The framework is then extended to the case of multiple erasures.

Let $F = \{f_i\}_{i=1}^N$ be a frame for a finite-dimensional Hilbert space $\mathcal{H}_n$, and let $G = \{g_i\}_{i=1}^N$ be a dual frame of $F$. For $p > 1$, we define the average Frobenius-norm error for a  $1-$erasure as
\begin{align*}
\mathrm{AE}_{\mathfrak{F}}^{(1),p}(F, G):= \left\{ \frac{1}{N} \sum\limits_{D \in \mathcal{D}^{(1)}} \left\| T_{G}^* D T_{F} \right\|_{\mathfrak{F}}^p \right\}^{1/p},
\end{align*}
where $\| \cdot \|_{\mathfrak{F}}$ denotes the Frobenius norm and $\mathcal{D}^{(1)}$ is the set of all diagonal matrices in $\mathbb{C}^{N \times N}$ with exactly one entry equal to one and the remaining entries zero. The corresponding optimal error value over all dual frames of $F$ is given by
\begin{align*}
\mathrm{AE}_{\mathfrak{F}}^{(1),p}(F) := \inf \left\{ \mathrm{AE}_{\mathfrak{F}}^{(1),p}(F, G) : G \text{ is a dual of } F \right\},
\end{align*}
and the collection of dual frames achieving this infimum is denoted by
\begin{align*}
\zeta_{\mathfrak{F}}^{(1),p}(F) := \left\{ G\,\text{be a dual of $F$} : \mathrm{AE}_{\mathfrak{F}}^{(1),p}(F, G) = \mathrm{AE}_{\mathfrak{F}}^{(1),p}(F) \right\}.
\end{align*}
Every element of \( \zeta_{\mathfrak{F}}^{(1),p}(F) \) is referred to as a \textit{$1$-erasure Frobenius-optimal dual frame of \( F \)}.

This framework naturally generalizes to $k-$erasure settings. For $k > 1$, define
\begin{align*}
	&\mathrm{AE}_{\mathfrak{F}}^{(k),p}(F, G) := \left\{ \frac{1}{\binom{N}{k}} \sum\limits_{D \in \mathcal{D}^{(k)}} \left\| T_{G}^* D T_{F} \right\|_{\mathfrak{F}}^p \right\}^{1/p}, \\&
	\mathrm{AE}_{\mathfrak{F}}^{(k),p}(F) := \inf \left\{ \mathrm{AE}_{\mathfrak{F}}^{(k),p}(F, G) : G \text{ is a $(k-1)-$erasure optimal dual of } F \right\}, \\&
	\zeta_{\mathfrak{F}}^{(k),p}(F)  := \left\{ G' \text{ is a dual of } F : \mathrm{AE}_{\mathfrak{F}}^{(k),p}(F, G) = \mathrm{AE}_{\mathfrak{F}}^{(k),p}(F) \right\}.
\end{align*}
Every element in $\zeta_{\mathfrak{F}}^{(k),p}(F)$ is referred to as a \textit{$k-$erasure optimal dual frame of $F$ under the Frobenius norm}.

\noindent When an erasure occurs at the \( i^{\text{th}} \) index, the associated error operator is given by  
\[
E_{\Lambda,F,G} f = \langle f, f_i \rangle g_i,
\]
where \( E_{\Lambda,F,G} \) represents the error contribution resulting from the loss of the \( i^{\text{th}} \) frame coefficient. It can be easily verified that the Frobenius norm of the error operator \( E_{\Lambda,F,G} \), corresponding to $1-$erasure at index \( i \), satisfies:
\begin{align}\label{eqn2point5}
\left\|E_{\Lambda,F,G}\right\|_{\mathfrak{F}} 
&= \left\|T_{G}^* D T_{F}\right\|_{\mathfrak{F}} \nonumber\\
&= \sqrt{\mathrm{tr}\left( T_{F}^* D T_{G} T_{G}^* D T_{F} \right)} \nonumber\\
&= \sqrt{\mathrm{tr}\left( D T_{F} T_{F}^* D T_{G} T_{G}^* \right)} \nonumber\\
&= \sqrt{ \|f_i\|^2 \cdot \|g_i\|^2 } \\&= \|f_i\| \cdot \|g_i\|.
\end{align}

This leads to a simplified expression for the average Frobenius-norm error for $1-$erasure:
\[
\mathrm{AE}_{\mathfrak{F}}^{(1),p}(F, G) = \left\{ \frac{1}{N} \sum_{i=1}^N \left( \|f_i\| \cdot \|g_i\| \right)^p \right\}^{1/p}.
\]

For any $(N, n)$-dual pair $(F, G)$ in $\mathcal{H}_n$, the following inequality holds:
\begin{align}\label{eqn4.6}
\mathrm{AE}_{\mathfrak{F}}^{(1),p}(F, G)
&= \left\{ \frac{1}{N} \sum_{i=1}^N \|f_i\|^p \cdot \|g_i\|^p \right\}^{1/p} \notag \\
&\geq \left( \frac{1}{N} \sum_{i=1}^N \|f_i\| \cdot \|g_i\| \right) \notag \\
&\geq \frac{1}{N} \sum_{i=1}^N \left| \langle f_i, g_i \rangle \right| \notag \\
&\geq \frac{n}{N}.
\end{align}

This chain of inequalities establishes a fundamental lower bound on the average Frobenius-norm error for $1$-erasure in terms of the dimension $n$ of the Hilbert space and the number of frame elements $N$. It emphasizes that even in the best-case scenario, the reconstruction error cannot be arbitrarily small, and is inherently limited by the geometry of the frame and its dual. \\
The following proposition establishes that when the frame $F$ is a uniform normalized tight frame (UNTF), the canonical dual frame is uniquely optimal under the Frobenius norm-based error measure. Specifically, for $p>2,$ the canonical dual minimizes the average error for $1-$erasure and is the unique minimizer. This uniqueness further extends to the case of 
$m-$erasures, showing that the canonical dual remains the optimal choice across all erasure levels.

  \begin{prop}
Let \( F = \{f_i\}_{i=1}^N \) be a UNTF for a finite-dimensional Hilbert space \( \mathcal{H}_n \). For \( p > 2 \), the canonical dual of \( F \) is the unique  $1$-erasure optimal dual frame under the Frobenius norm. Consequently, it is also the unique optimal dual frame for $m$-erasures, for all \( m \geq 1 \).
\end{prop}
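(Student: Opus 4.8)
The plan is to exploit the explicit formula $\mathrm{AE}_{\mathfrak{F}}^{(1),p}(F,G)=\bigl\{\tfrac1N\sum_i (\|f_i\|\,\|g_i\|)^p\bigr\}^{1/p}$ together with the fact that for a UNTF one has $S_F=\tfrac{N}{n}I$, so $\|f_i\|=1$ for all $i$ and the canonical dual is $\tilde g_i=\tfrac{n}{N}f_i$, which is a $1$-uniform dual with $\|\tilde g_i\|=\tfrac nN$. Thus for the canonical dual the error equals exactly $\tfrac nN$, matching the lower bound in \eqref{eqn4.6}. So it suffices to show: (i) any dual $G$ with $\mathrm{AE}_{\mathfrak{F}}^{(1),p}(F,G)=\tfrac nN$ must force equality throughout the chain \eqref{eqn4.6}, and (ii) equality there, when $p>2$, forces $G$ to be the canonical dual.

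First I would trace the equality cases in \eqref{eqn4.6}. Equality in the last step $\tfrac1N\sum|\langle f_i,g_i\rangle|\ge \tfrac nN$ combined with $n=\sum\langle f_i,g_i\rangle$ forces each $\langle f_i,g_i\rangle=\tfrac nN$ (a positive real), so $G$ is a $1$-uniform dual. Equality in $\|f_i\|\|g_i\|\ge|\langle f_i,g_i\rangle|$ (Cauchy–Schwarz) then forces $g_i$ to be a positive scalar multiple of $f_i$; since $\|f_i\|=1$ and $\langle f_i,g_i\rangle=\tfrac nN$, this gives $g_i=\tfrac nN f_i=\tilde g_i$ for every $i$. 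Hence $G$ is the canonical dual. It remains to rule out that some non-canonical $G$ achieves the infimum $\tfrac nN$ without passing through this chain — but every $G$ satisfies \eqref{eqn4.6}, so any optimal $G$ achieves value $\tfrac nN$ only by equality in all steps; the role of $p>2$ is precisely in the first inequality $\bigl\{\tfrac1N\sum \|g_i\|^p\bigr\}^{1/p}\ge \tfrac1N\sum\|g_i\|$ (power-mean / Theorem \ref{lemma2point4} with exponent $2/p$ applied to $a_i=\|g_i\|^2$, or Jensen directly), where for $p>2$ equality holds iff all $\|g_i\|$ are equal; I would note that once $G$ is $1$-uniform and each $g_i=c_if_i$ this forces all $c_i$ equal, which is consistent with the conclusion. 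The main subtlety to handle carefully is making sure the argument does not secretly need $p>2$ in more than one place and that $p=2$ genuinely fails (for $p=2$ the first inequality becomes an equality for all $G$ and uniqueness is lost), which I would flag explicitly.

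For the extension to $m$-erasures, recall that $\mathrm{AE}_{\mathfrak{F}}^{(m),p}(F)$ is defined by taking the infimum only over $(m-1)$-erasure optimal duals. By induction, the set of $(m-1)$-erasure optimal duals is the singleton $\{\tilde G\}$ (canonical dual), so the infimum defining the $m$-erasure optimal set is over a one-point set and is automatically attained only at $\tilde G$. Thus $\zeta_{\mathfrak{F}}^{(m),p}(F)=\{\tilde G\}$ for all $m\ge 1$. I would state this induction cleanly: the base case $m=1$ is the first part of the proposition, and the inductive step is immediate from the nested definition of the optimality hierarchy. The only thing worth a sentence is to confirm that the canonical dual does lie in $\zeta_{\mathfrak{F}}^{(m),p}(F)$ (nonemptiness), which holds because it is the unique candidate and $\mathrm{AE}_{\mathfrak{F}}^{(m),p}(F,\tilde G)$ is a well-defined finite number equal to the infimum over the singleton.

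I expect the only real work is item (ii) — verifying the equality analysis in \eqref{eqn4.6} rigorously and isolating exactly where $p>2$ enters — while the $m$-erasure part is essentially bookkeeping given the recursive definition.
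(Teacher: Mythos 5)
Your proof is correct for the stated range \(p>2\), but it takes a genuinely different route from the paper's. The paper writes any optimal dual as a perturbation of the canonical one, \(g_i=S_F^{-1}f_i+u_i\), uses the trace condition \(\sum_i\langle f_i,u_i\rangle=0\) together with Jensen's inequality for \(x\mapsto x^{p/2}\) (this is exactly where the hypothesis \(p>2\) enters, since the exponent \(p/2\) must exceed \(1\)) to force \(\sum_i\|u_i\|^2\le 0\), hence \(u_i=0\). You instead trace the equality cases in the lower-bound chain \eqref{eqn4.6}: power-mean equality forces all products \(\|f_i\|\,\|g_i\|\) to coincide, termwise Cauchy--Schwarz equality forces each \(g_i\) to be a scalar multiple of \(f_i\), and phase alignment in \(\sum_i|\langle f_i,g_i\rangle|\ge|\sum_i\langle f_i,g_i\rangle|\) pins \(\langle f_i,g_i\rangle=\tfrac nN\); this is shorter, only needs \(p>1\), and gives \(1\)-uniformity of the optimal dual as a by-product. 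Two corrections to your commentary: first, your aside that ``\(p=2\) genuinely fails because the first inequality becomes an equality for all \(G\)'' is wrong --- for \(p=2\) that step is the QM--AM inequality, with equality iff all terms are equal, so your own argument covers \(p=2\) as well (the restriction \(p>2\) reflects the paper's method, not an obstruction); second, the claim that equality in the last step alone forces each \(\langle f_i,g_i\rangle=\tfrac nN\) is not self-contained, since phase alignment only gives nonnegativity --- you need the power-mean equality (all \(\|f_i\|\,\|g_i\|\) equal) plus Cauchy--Schwarz to get the common value, and this ingredient is essential: without it (e.g.\ at \(p=1\)) uniqueness fails, as a uniform Parseval frame built from two scaled orthonormal bases admits many duals of the form \(g_i=c_if_i\). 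Your normalization of a UNTF (\(\|f_i\|=1\), \(S_F=\tfrac Nn I\)) differs from the paper's Parseval convention (\(\|f_i\|^2=\tfrac nN\)), but the argument is scale-invariant, so this is harmless. For the \(m\)-erasure claim, your induction through the nested definition of \(\zeta_{\mathfrak F}^{(m),p}(F)\) is sound and in fact cleaner than the paper's brief appeal to ``convexity.''
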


          \begin{proof}
Let \( F = \{f_i\}_{i=1}^N \) be a uniform normalized tight frame for \( \mathcal{H}_n \). Then,  \( \|f_i\|^2 = \frac{n}{N} \) for all \( i \), and the canonical dual of \( F \) is itself, i.e., \( S_F^{-1}F = F \). Then the average Frobenius norm error for the frame $F$ and its canonical dual is
$$\mathrm{AE}_{\mathfrak{F}}^{(1),p}(F, F) = \left\{ \frac{1}{N} \sum_{i=1}^N \|f_i\|^{2p} \right\}^{1/p} = \left\{ \frac{1}{N} \cdot N \left( \frac{n}{N} \right)^p \right\}^{1/p} = \frac{n}{N}.$$

Now suppose \( G = \{g_i\}_{i=1}^N = \{f_i + u_i\}_{i=1}^N \) be a dual frame of \( F \) and assume it is also $1-$erasure optimal under the Frobenius norm for \( p > 2 \). We then have \( \mathrm{AE}_{\mathfrak{F}}^{(1),p}(F, G) = \frac{n}{N} \), and by the equation \eqref{eqn2point5}, we  have
\begin{align}\label{eqn3.1}
\left\{ \frac{1}{N} \sum_{i=1}^N \left( \|f_i\| \cdot \|f_i + u_i\| \right)^p \right\}^{1/p} = \frac{n}{N}.
\end{align}

Substituting \( \|f_i\| = \sqrt{n/N} \) in \eqref{eqn3.1}, we get:
\begin{align*}
\dfrac{1}{N} \sum_{i=1}^N \left( \frac{n}{N} \right)^{p/2} \|f_i + u_i\|^p = \left( \frac{n}{N} \right)^p.
\end{align*}

This leads to $\left( \dfrac{n}{N} \right)^{p/2} = \dfrac{1}{N} \sum\limits_{i=1}^N \|f_i + u_i\|^p.$ Now by using Theorem\ref{lemma2point4}, we have
\begin{align*}
\left( \frac{n}{N} \right)^{p/2} 
&= \frac{1}{N} \sum_{i=1}^N \left( \|f_i\|^2 + \|u_i\|^2 + 2 \mathrm{Re} \langle f_i, u_i \rangle \right)^{p/2} \\
&\geq \left( \frac{1}{N} \sum_{i=1}^N \left( \|f_i\|^2 + \|u_i\|^2 + 2 \mathrm{Re} \langle f_i, u_i \rangle \right) \right)^{p/2}.
\end{align*}

Using the fact that \( \sum_{i=1}^N \langle f, u_i \rangle f_i = 0 \) for all \( f \in \mathcal{H}_n \), we can get \( \sum_{i=1}^N \langle f_i, u_i \rangle = 0 .\)  Substituting \( \|f_i\|^2 = \frac{n}{N} \) and using the  condition  $\sum_{i=1}^N \langle f_i, u_i \rangle = 0 $, we obtain:
\[
\left( \frac{n}{N} \right)^{p/2} \geq \left( \frac{n}{N} + \frac{1}{N} \sum_{i=1}^N \|u_i\|^2 \right)^{p/2}.
\]

Since \( p > 2 \), the function \( x \mapsto x^{p/2} \) is strictly increasing, hence this inequality implies:
\[
\frac{n}{N} \geq \frac{n}{N} + \frac{1}{N} \sum_{i=1}^N \|u_i\|^2,
\]
which forces \( \|u_i\| = 0 \) for all \( i \). Therefore, \( G = F \), i.e., the canonical dual is the unique 1-erasure optimal dual under Frobenius norm when \( p > 2 \). The result similarly extends to \( m \)-erasures due to the convexity of the same function applied to higher-order sums.
\end{proof}

The following theorem provides a sufficient condition under which the canonical dual frame of $F$ attains one-erasure optimality with respect to the Frobenius norm. Let us define
$L := \max \left\{ \|f_i\| \cdot \|S_F^{-1} f_i\| : 1 \leq i \leq N \right\},$
and let $\eta_1 := \left\{ i : \|f_i\| \cdot \|S_F^{-1} f_i\| = L \right\}, \; \eta_2 := \{1,2,\ldots,N\} \setminus \eta_1.$ We also define the subspaces $H_j := \mathrm{span} \{ f_i : i \in \eta_j \}, \quad \text{for } j = 1,2.$

\begin{thm}\label{thm3point2}
Consider a frame \( F = \{f_i\}_{i=1}^N \) for a finite-dimensional Hilbert space \( \mathcal{H}_n \). Assume that the following conditions are satisfied:
\begin{itemize}
    \item [{\em (i)}]\( H_1 \cap H_2 = \{0\} \),
    \item [{\em (ii)}]\( \|f_i\| = c \) for all \( i \in \eta_1 \), for some constant \( c > 0 \),
    \item [{\em (iii)}]The set \( \{f_i : i \in \eta_2\} \) is linearly independent.
\end{itemize}
Then the canonical dual frame of \( F \) is a $1-$erasure optimal dual under the Frobenius norm for all \( p > 2 \).
\end{thm}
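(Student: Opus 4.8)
The plan is to show that for an \emph{arbitrary} dual frame $G=\{g_i\}_{i=1}^N$ of $F$ one has $\mathrm{AE}_{\mathfrak{F}}^{(1),p}(F,G)\ge\mathrm{AE}_{\mathfrak{F}}^{(1),p}(F,S_F^{-1}F)$. Recalling that $\mathrm{AE}_{\mathfrak{F}}^{(1),p}(F,G)^p=\frac1N\sum_{i=1}^N\|f_i\|^p\|g_i\|^p$ and that the canonical dual yields $\frac1N\sum_{i=1}^N(\|f_i\|\,\|S_F^{-1}f_i\|)^p$, this reduces to proving $\sum_{i=1}^N\|f_i\|^p\|g_i\|^p\ge\sum_{i=1}^N(\|f_i\|\,\|S_F^{-1}f_i\|)^p$. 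Writing $g_i=S_F^{-1}f_i+u_i$ with $\sum_{i=1}^N\langle f,u_i\rangle f_i=0$ for all $f$, the first preliminary step would be to extract the identity $\sum_{i=1}^N\operatorname{Re}\langle S_F^{-1}f_i,u_i\rangle=0$: applying $S_F^{-1}$ to the constraint shows the operator $f\mapsto\sum_{i=1}^N\langle f,u_i\rangle S_F^{-1}f_i$ is zero, hence has zero trace, and that trace equals $\sum_{i=1}^N\langle S_F^{-1}f_i,u_i\rangle$ — the same trace bookkeeping as in the proof of the preceding proposition.

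The core of the argument is to use (i)--(iii) to localise the perturbation $\{u_i\}$. Splitting $\sum_{i=1}^N\langle f,u_i\rangle f_i=0$ into its $\eta_1$-part (which lies in $H_1$) and its $\eta_2$-part (which lies in $H_2$), condition (i) forces both pieces to vanish; in particular $\sum_{i\in\eta_2}\langle f,u_i\rangle f_i=0$ for every $f$, and the linear independence in (iii) then forces $u_i=0$ for all $i\in\eta_2$. Consequently the $\eta_2$-blocks of the two sides of the target inequality coincide, and moreover $\sum_{i\in\eta_1}\operatorname{Re}\langle S_F^{-1}f_i,u_i\rangle=\sum_{i=1}^N\operatorname{Re}\langle S_F^{-1}f_i,u_i\rangle=0$.

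For the $\eta_1$-block I would invoke (ii): since $\|f_i\|=c$ there, $\|S_F^{-1}f_i\|=L/c$, so expanding $\|g_i\|^2=\|S_F^{-1}f_i\|^2+\|u_i\|^2+2\operatorname{Re}\langle S_F^{-1}f_i,u_i\rangle$ gives $\|f_i\|^2\|g_i\|^2=L^2+\alpha_i$ with $\alpha_i=c^2\|u_i\|^2+2c^2\operatorname{Re}\langle S_F^{-1}f_i,u_i\rangle$, whence $\sum_{i\in\eta_1}\alpha_i=c^2\sum_{i\in\eta_1}\|u_i\|^2\ge0$ and each $L^2+\alpha_i=\|f_i\|^2\|g_i\|^2\ge0$. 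Since $t\mapsto t^{p/2}$ is convex and increasing on $[0,\infty)$ for $p>2$, Jensen's inequality yields $\sum_{i\in\eta_1}(L^2+\alpha_i)^{p/2}\ge|\eta_1|\bigl(L^2+\tfrac1{|\eta_1|}\sum_{i\in\eta_1}\alpha_i\bigr)^{p/2}\ge|\eta_1|\,L^p$, which is exactly the $\eta_1$-contribution of the canonical dual; adding back the equal $\eta_2$-contributions completes the estimate.

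I expect the main obstacle to be the localisation step: one has to check carefully that hypotheses (i)--(iii) together kill $u_i$ on $\eta_2$ and make the cross terms $\operatorname{Re}\langle S_F^{-1}f_i,u_i\rangle$ sum to zero over $\eta_1$ — this is precisely where equal norms on $\eta_1$ are needed, so that the weights $\|f_i\|^2=c^2$ can be pulled out of the sum and combined with the global trace identity. Once this structural fact is in place, the convexity/monotonicity estimate for $t^{p/2}$ is routine, and it also makes transparent why $p>2$ is the natural hypothesis.
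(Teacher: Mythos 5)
Your proposal is correct, and its skeleton is the same as the paper's: write $g_i=S_F^{-1}f_i+u_i$, split the duality constraint along $\eta_1,\eta_2$ using (i), kill $u_i$ on $\eta_2$ via (iii), and get the trace identity $\sum_{i\in\eta_1}\langle S_F^{-1}f_i,u_i\rangle=0$ exactly as the paper does with $\mathrm{tr}(S_F^{-1}T_{F_1}^*T_{U_1})=0$. Where you diverge is the endgame and the logical framing. The paper assumes $G$ is an optimal dual, deduces $\sum_{i\in\eta_1}\bigl(L^2+\|f_i\|^2\|u_i\|^2+2\|f_i\|^2\operatorname{Re}\langle S_F^{-1}f_i,u_i\rangle\bigr)^{p/2}\leq|\eta_1|L^p$, and then invokes its auxiliary Lemma on sums of the form $\sum(L_i^2+\alpha_i)^{p/2}$ to conclude $u_i=0$ (a rigidity statement, which also yields uniqueness); note that the lemma's hypothesis $\sum\alpha_i=0$ is not literally met there, since with (ii) one only has $\sum_{i\in\eta_1}\alpha_i=c^2\sum_{i\in\eta_1}\|u_i\|^2\geq 0$, so the paper's application implicitly reruns the convexity argument with an inequality. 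You instead prove the optimality claim directly for an arbitrary dual: with $\alpha_i=c^2\|u_i\|^2+2c^2\operatorname{Re}\langle S_F^{-1}f_i,u_i\rangle$ and $\sum_{i\in\eta_1}\alpha_i\geq 0$, Jensen plus monotonicity of $t\mapsto t^{p/2}$ gives $\sum_{i\in\eta_1}(L^2+\alpha_i)^{p/2}\geq|\eta_1|L^p$, so $\mathrm{AE}_{\mathfrak{F}}^{(1),p}(F,G)\geq\mathrm{AE}_{\mathfrak{F}}^{(1),p}(F,S_F^{-1}F)$, which is precisely the theorem's assertion without needing an optimal dual to exist a priori. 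Your route buys two things: it makes explicit where hypothesis (ii) is used (pulling the constant $\|f_i\|^2=c^2$ out so the cross terms cancel via the trace identity), which the paper's written proof never invokes, and it avoids the mismatch in the lemma's hypotheses; the paper's route, in exchange, delivers uniqueness of the optimal dual, which your inequality also gives with one extra sentence (equality forces $\sum_{i\in\eta_1}\|u_i\|^2=0$).
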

To prove the theorem, we require the following auxiliary lemma.

\begin{lem}\label{lemma4point2}
Let \( L > 0 \) be a positive real number and let \( \{L_i\}_{i=1}^s \) be a sequence of positive real numbers such that \( L_i \geq L \) for all \( 1 \leq i \leq s \). If
\[
\sum_{i=1}^s \left(L_i^2 + \alpha_i \right)^{p/2} \leq sL^p \quad \text{and} \quad \sum_{i=1}^s \alpha_i = 0,
\]
then for all \( i \), we have \( \alpha_i = 0 \) and \( L_i = L \), provided \( p \geq 2 \).
\end{lem}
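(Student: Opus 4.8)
The plan is to argue by contradiction using the strict convexity of the function $x \mapsto x^{p/2}$ for $p > 2$ (and separately handle the degenerate case $p = 2$). First I would record the pointwise inequality: since $L_i \geq L > 0$ for every $i$, we have $L_i^2 + \alpha_i \geq L^2 + \alpha_i$ whenever $L_i^2 + \alpha_i \geq 0$; more to the point, writing $\beta_i := L_i^2 - L^2 \geq 0$, the hypothesis becomes $\sum_{i=1}^s (L^2 + \beta_i + \alpha_i)^{p/2} \leq sL^p$. Set $\gamma_i := \beta_i + \alpha_i$, so that $\sum_i \gamma_i = \sum_i \beta_i \geq 0$ (using $\sum_i \alpha_i = 0$). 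The goal is to show all $\gamma_i = 0$ and $\beta_i = 0$.

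Next I would apply Theorem \ref{lemma2point4} (the weighted power-mean inequality) with $m = p/2 > 1$, equal weights $q_i = 1$, and $a_i = L^2 + \gamma_i$ — after first checking $a_i > 0$, which follows because each term on the left must be a nonnegative real raised to the power $p/2$, and one can rule out $a_i = 0$ since that would force a strict deficit elsewhere. This gives
\[
\frac{1}{s}\sum_{i=1}^s (L^2 + \gamma_i)^{p/2} \geq \left( \frac{1}{s}\sum_{i=1}^s (L^2 + \gamma_i) \right)^{p/2} = \left( L^2 + \frac{1}{s}\sum_{i=1}^s \gamma_i \right)^{p/2} \geq (L^2)^{p/2} = L^p,
\]
where the last step uses $\sum_i \gamma_i \geq 0$ together with monotonicity of $x \mapsto x^{p/2}$. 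Combining with the hypothesis $\sum_i (L^2+\gamma_i)^{p/2} \leq sL^p$ forces equality throughout.

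Finally I would extract the consequences of equality. Equality in the power-mean step of Theorem \ref{lemma2point4} holds if and only if all the $a_i = L^2 + \gamma_i$ are equal, say to a common value $v \geq L^2$; equality in the last monotonicity step then forces $v = L^2$, hence $\gamma_i = 0$ for all $i$, i.e. $\alpha_i = -\beta_i$ for every $i$. Summing and using $\sum_i \alpha_i = 0$ gives $\sum_i \beta_i = 0$, and since each $\beta_i \geq 0$ this yields $\beta_i = 0$, i.e. $L_i = L$ for all $i$; feeding this back gives $\alpha_i = 0$ as well. The main obstacle I anticipate is the bookkeeping needed to justify that the quantities $L^2 + \gamma_i$ are genuinely positive (so that Theorem \ref{lemma2point4} applies verbatim) rather than merely nonnegative — this is where I would have to rule out a vanishing term absorbing all the "mass," and the cleanest route is to observe that if some $L^2 + \gamma_{i_0} = 0$ then dropping that index and applying the inequality to the remaining $s-1$ positive terms (whose $\gamma$'s now sum to $\beta_{i_0} - L^2 \geq$ something, but in any case their behavior is constrained) yields a contradiction with the strict positivity of $L$. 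For $p = 2$ the function is affine and the argument collapses to a direct computation: $\sum_i (L_i^2 + \alpha_i) = \sum_i L_i^2 \geq sL^2$, so the hypothesis forces $\sum_i L_i^2 = sL^2$, hence each $L_i = L$, and then $\sum_i \alpha_i = 0$ with the remaining slack being zero gives $\alpha_i = 0$.
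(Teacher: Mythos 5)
Your argument is essentially the paper's: both proofs bound $\sum_{i}(L_i^2+\alpha_i)^{p/2}$ from below by $sL^p$ via Jensen's/power-mean inequality with exponent $p/2$ (using $\sum_i\alpha_i=0$ and $L_i\ge L$), then force equality throughout and conclude from the equality case that the quantities $L_i^2+\alpha_i$ are all equal to $L^2$, after which the sign constraints give $\alpha_i=0$ and $L_i=L$; your substitution $\beta_i=L_i^2-L^2$, $\gamma_i=\beta_i+\alpha_i$ is only a reparametrization. Your worry about strict positivity of the arguments (needed to quote Theorem~\ref{lemma2point4} verbatim) is handled more simply the way the paper does it: $x\mapsto x^{p/2}$ is convex on all of $[0,\infty)$, strictly so for $p>2$, and nonnegativity of $L_i^2+\alpha_i$ is implicit in the statement for the powers to make sense, so no index needs to be dropped.

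One caveat: your separate treatment of $p=2$ does not work. After you force $L_i=L$ for all $i$, the hypotheses reduce to $\sum_i\alpha_i\le 0$ together with $\sum_i\alpha_i=0$, which do not give $\alpha_i=0$ individually (take $s=2$, $L_1=L_2=L=1$, $\alpha_1=\tfrac12$, $\alpha_2=-\tfrac12$: both hypotheses hold, yet $\alpha_i\neq 0$). This is not really your fault: the conclusion about the $\alpha_i$ is false at $p=2$, and the paper's own proof has the same defect there, since equality in Jensen's inequality forces the terms to be equal only when the function is strictly convex, i.e.\ only for $p>2$. The boundary case is harmless in context because Theorem~\ref{thm3point2} invokes the lemma only with $p>2$, but you should restrict your argument to $p>2$ rather than attempt to patch $p=2$.
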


\begin{proof}
We begin by observing that the convexity of the function \( x \mapsto x^{p/2} \) for \( p \geq 2 \) implies:
\[
\sum_{i=1}^s \left( L_i^2 + \alpha_i \right)^{p/2} \geq s \left( \frac{1}{s} \sum_{i=1}^s (L_i^2 + \alpha_i) \right)^{p/2} = s \left( \frac{1}{s} \sum_{i=1}^s L_i^2 \right)^{p/2}.
\]
Since \( L_i \geq L \), we further have
\[
\left( \frac{1}{s} \sum_{i=1}^s L_i^2 \right)^{p/2} \geq L^p,
\]
which together gives
\[
\sum_{i=1}^s \left( L_i^2 + \alpha_i \right)^{p/2} \geq sL^p.
\]
But the assumption provides the opposite inequality:
\[
\sum_{i=1}^s \left( L_i^2 + \alpha_i \right)^{p/2} \leq sL^p.
\]
Therefore, all inequalities must be equalities. In particular, equality in Jensen's inequality implies that each term \( L_i^2 + \alpha_i \) is equal to the mean:
\[
L_i^2 + \alpha_i = c \quad \text{for some constant } c \text{ and for all } i.
\]
So,
\[
sL^p = \sum_{i=1}^s \left( L_i^2 + \alpha_i \right)^{p/2} = \sum_{i=1}^s c^{p/2} = sc^{p/2}.
\]
This gives \( c^{p/2} = L^p \Rightarrow c = L^2 \), and hence \( L_i^2 + \alpha_i = L^2 \), which implies \( \alpha_i = L^2 - L_i^2 \leq 0 \) for all \( i \), since \( L_i \geq L \). Now, using the fact that \( \sum_{i=1}^s \alpha_i = 0 \), and each \( \alpha_i \leq 0 \), it follows that \( \alpha_i = 0 \) for all \( i \), which further implies \( L_i = L \) for all \( i \).
\end{proof}

\begin{proof}[\textbf{Proof of Theorem \ref{thm3point2}}]
Let \( G = \{ g_i \}_{i=1}^N \) be a \(1-\)erasure optimal dual of \( F \) with respect to the Frobenius norm.  $G$ can be  expressed as $g_i = S_F^{-1} f_i + u_i, \quad 1 \leq i \leq N,$ where the sequence \( \{u_i\}_{i=1}^N \) satisfies
\[
\sum_{i=1}^N \langle f, u_i \rangle f_i = 0 \quad \text{for all } f \in \mathcal{H}_n.
\]
\noindent This condition can be decomposed as
\[
\sum_{i \in \eta_1} \langle f, u_i \rangle f_i + \sum_{i \in \eta_2} \langle f, u_i \rangle f_i = 0.
\]
Since \( H_1 \cap H_2 = \{0\} \), this decomposition is unique and hence both sums must vanish individually, i.e.
\[
\sum_{i \in \eta_1} \langle f, u_i \rangle f_i = 0 \quad \text{and} \quad \sum_{i \in \eta_2} \langle f, u_i \rangle f_i = 0, \quad \text{for all } f \in \mathcal{H}_n.
\]

Now, the set \( \{f_i : i \in \eta_2\} \) is linearly independent, so the second condition implies \( \langle f, u_i \rangle = 0 \) for all \( i \in \eta_2 \) and all \( f \in \mathcal{H}_n \). This gives \( u_i = 0 \) for all \( i \in \eta_2 \). Let  \( U_1 = \{ u_i \}_{i \in \eta_1} \) and \( F_1 = \{ f_i \}_{i \in \eta_1} \). Then the condition $\sum_{i \in \eta_1} \langle f, u_i \rangle f_i = 0 \quad \text{for all } f \in \mathcal{H}_n$ implies that \( T_{F_1}^* T_{U_1} = 0 \), where \( T_{F_1} \) and \( T_{U_1} \) are the analysis operators corresponding to \( F_1 \) and \( U_1 \), respectively. Thus,
\[
\sum_{i \in \eta_1} \langle S_F^{-1} f_i, u_i \rangle = \mathrm{tr} \left( T_{U_1} T_{S_F^{-1} F_1}^* \right)
= \mathrm{tr} \left( S_F^{-1} T_{F_1}^* T_{U_1} \right) = 0.
\]

Since \( G \) is $1-$erasure optimal, we must have:
\begin{align}\label{eqnspec3point8}
 \left\{ \frac{1}{N} \sum_{i \in \eta_1} (\|f_i\| \cdot \|g_i\|)^p + \frac{1}{N}\sum_{i \in \eta_2} (\|f_i\| \cdot \|g_i\|)^p \right\}^{1/p} &=\mathrm{AE}_{\mathfrak{F}}^{(1),p}(F, G)  \nonumber\\
&\leq \mathrm{AE}_{\mathfrak{F}}^{(1),p}(F, S_F^{-1}F) \nonumber\\&
\left\{ \frac{1}{N} \sum_{i \in \eta_1} (\|f_i\| \cdot \|S_F^{-1} f_i\|)^p + \frac{1}{N}\sum_{i \in \eta_2} (\|f_i\| \cdot \|S_F^{-1} f_i\|)^p \right\}^{1/p}.
\end{align}

Using the definition of \( L \), we have $\sum\limits_{i \in \eta_1} (\|f_i\| \cdot \|g_i\|)^p \leq |\eta_1| L^p.$ Substituting \( g_i = S_F^{-1} f_i + u_i \), we have:
\begin{align*}
\sum_{i \in \eta_1} (\|f_i\| \cdot \|g_i\|)^p
&= \sum_{i \in \eta_1} \left( \|f_i\| \cdot \| S_F^{-1} f_i + u_i \| \right)^p \\
&= \sum_{i \in \eta_1} \left( \|f_i\|^2 \| S_F^{-1} f_i \|^2 + \|f_i\|^2 \|u_i\|^2 + 2 \|f_i\|^2 \mathrm{Re} \langle S_F^{-1} f_i, u_i \rangle \right)^{p/2} \\
&= \sum_{i \in \eta_1} \left( L^2 + \|f_i\|^2 \|u_i\|^2 + 2 \|f_i\|^2 \mathrm{Re} \langle S_F^{-1} f_i, u_i \rangle \right)^{p/2}.
\end{align*}

Therefore from \eqref{eqnspec3point8}, we have
\[
\sum_{i \in \eta_1} \left( L^2 + \|f_i\|^2 \|u_i\|^2 + 2 \|f_i\|^2 \mathrm{Re} \langle S_F^{-1} f_i, u_i \rangle \right)^{p/2} \leq |\eta_1| L^p.
\]

By Lemma~\ref{lemma4point2}, the condition $\sum_{i \in \eta_1} \langle S_F^{-1} f_i, u_i \rangle = 0$ implies that \( \|u_i\| = 0 \) for all \( i \in \eta_1 \). Consequently, \( u_i = 0 \) for all \( i \in \eta_1 \). Since \( u_i = 0 \) on \( \eta_1 \) and the remaining components are already zero, we obtain \( u_i = 0 \) for every \( 1 \leq i \leq N \). Thus, \( G = S_F^{-1} F \), establishing that the canonical dual is the unique \(1-\)erasure optimal dual with respect to the Frobenius norm.

\end{proof}

Now, consider the case when erasures occur at multiple indices. For \( 1 < m \leq N \), suppose errors occur at positions \( i_1, i_2, \dots, i_m \). Then, for any dual frame \( G' = \{g'_i\}_{i=1}^N \) of \( F \), the Frobenius norm of the associated error operator satisfies:

\begin{align} \label{eqn4point8}
\left\|E_{\Lambda, F, G'}\right\|_{\mathfrak{F}} 
&= \left\|T_{G'}^* D T_F\right\|_{\mathfrak{F}} \nonumber \\
&= \sqrt{\mathrm{tr}\left(D T_F T_F^* D T_{G'} T_{G'}^*\right)} \nonumber \\
&= \sqrt{ \sum_{r=1}^m \langle g'_{i_1}, g'_{i_r} \rangle \langle f_{i_r}, f_{i_1} \rangle 
      + \sum_{r=1}^m \langle g'_{i_2}, g'_{i_r} \rangle \langle f_{i_r}, f_{i_2} \rangle 
      + \cdots 
      + \sum_{r=1}^m \langle g'_{i_m}, g'_{i_r} \rangle \langle f_{i_r}, f_{i_m} \rangle } \nonumber \\
&= \sqrt{ \sum_{r=1}^m \|g'_{i_r}\|^2 \|f_{i_r}\|^2 
      + \sum_{\substack{j,k=1 \\ j \neq k}}^m \langle g'_{i_j}, g'_{i_k} \rangle \langle f_{i_k}, f_{i_j} \rangle } \nonumber \\
&= \sqrt{ \sum_{r=1}^m \|g'_{i_r}\|^2 \|f_{i_r}\|^2 
      + 2\, \mathrm{Re} \left( \sum_{\substack{j,k=1 \\ j > k}}^m \langle g'_{i_j}, g'_{i_k} \rangle \langle f_{i_k}, f_{i_j} \rangle \right) }.
\end{align}

  Now, we present an explicit expression for the Frobenius-norm-based average error for any $m-$ erasures when the underlying frame is a uniform tight frame with constant modulus of inner products. The result also establishes the uniqueness of the canonical dual as the optimal dual frame under such conditions. 
   
     \begin{thm}\label{thm3point4}
Let \( F = \{f_i\}_{i=1}^N \) be a uniform tight frame for the Hilbert space \( \mathcal{H}_n \), such that \( |\langle f_i, f_j \rangle| \) is constant for all \( i \neq j \). Then, for any \( 1 \leq m \leq N \) and \( p > 0 \), the average Frobenius-norm error over all \( m \)-erasure patterns satisfies $\mathrm{AE}_{\mathfrak{F}}^{(m),p}(F) = \sqrt{m\left(\frac{n}{N}\right)^2 + \frac{m(m-1)(nN - n^2)}{N^2(N-1)}}. $
Moreover, the canonical dual frame \( S_F^{-1} F \) is the unique optimal dual of \( F \) under the Frobenius norm for any \( m-\)erasure. 
\end{thm}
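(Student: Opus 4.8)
The plan is to separate the argument into (i) an exact patternwise computation for the canonical dual, (ii) an averaging identity that isolates a nonnegative perturbation term, and (iii) a power-mean step that converts the $\ell^2$-average bound into the $\ell^p$-average bound.

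\emph{Step 1 (the canonical dual, pattern by pattern).} Since $F$ is a uniform tight frame with tight bound $A'$, all $\|f_i\|^2=\beta$ agree, $N\beta=\mathrm{tr}(S_F)=nA'$, $S_F=A'I$, and the canonical dual is $g_i=A'^{-1}f_i$, so $\|f_i\|\,\|g_i\|=n/N$ and $\langle g_j,g_k\rangle\langle f_k,f_j\rangle=A'^{-2}\,|\langle f_j,f_k\rangle|^2$. Writing $\gamma$ for the common value of $|\langle f_i,f_j\rangle|^2$ ($i\ne j$) and using $\sum_{k=1}^N|\langle f_i,f_k\rangle|^2=\langle S_Ff_i,f_i\rangle=A'\beta$, subtracting the $k=i$ term gives $(N-1)\gamma=A'\beta-\beta^2$, hence $A'^{-2}\gamma=(nN-n^2)/\big(N^2(N-1)\big)$. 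Substituting these two quantities into the expansion \eqref{eqn4point8} shows that for \emph{every} $m$-subset $\Lambda$,
\[
\big\|E_{\Lambda,F,S_F^{-1}F}\big\|_{\mathfrak F}^2=m\Big(\tfrac nN\Big)^2+\frac{m(m-1)(nN-n^2)}{N^2(N-1)}=:C ,
\]
independent of $\Lambda$; therefore $\mathrm{AE}_{\mathfrak F}^{(m),p}(F,S_F^{-1}F)=\sqrt C$ for every $p>0$.

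\emph{Step 2 (averaging over all patterns).} Write an arbitrary dual as $g_i=S_F^{-1}f_i+u_i$, so that $T_F^*T_U=0$ and $T_{S_F^{-1}F}^{*}=S_F^{-1}T_F^{*}$, and let $D_\Lambda$ be the diagonal projection onto the coordinates in $\Lambda$. From $T_G^*D_\Lambda T_F=S_F^{-1}T_F^*D_\Lambda T_F+T_U^*D_\Lambda T_F$ and expanding the Frobenius norm,
\[
\big\|E_{\Lambda,F,G}\big\|_{\mathfrak F}^2=\big\|S_F^{-1}T_F^*D_\Lambda T_F\big\|_{\mathfrak F}^2+\big\|T_U^*D_\Lambda T_F\big\|_{\mathfrak F}^2+2\,\mathrm{Re}\,\mathrm{tr}\!\big(D_\Lambda T_FT_F^*D_\Lambda\,T_U\,S_F^{-1}T_F^{*}\big).
\]
Summing over all $\binom Nm$ subsets and using the double-counting identity $\sum_\Lambda D_\Lambda T_FT_F^*D_\Lambda=\binom{N-2}{m-2}T_FT_F^*+\binom{N-2}{m-1}\beta I_N$, together with $T_F^*T_FT_F^*=S_FT_F^*$ and $T_F^*T_U=0$, every cross term vanishes and $\sum_\Lambda\|T_U^*D_\Lambda T_F\|_{\mathfrak F}^2=\binom{N-2}{m-1}\beta\sum_{i=1}^N\|u_i\|^2$. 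Hence
\[
\frac1{\binom Nm}\sum_{\Lambda}\big\|E_{\Lambda,F,G}\big\|_{\mathfrak F}^2=C+\frac{\binom{N-2}{m-1}\beta}{\binom Nm}\sum_{i=1}^N\|u_i\|^2\ \ge\ C,
\]
and, since $\beta>0$, for $1\le m\le N-1$ equality holds if and only if $u_i=0$ for all $i$, i.e. $G=S_F^{-1}F$. (The fact that $\sum_{j,k}\langle g_j,g_k\rangle\langle f_k,f_j\rangle=\mathrm{tr}\big((T_F^*T_G)(T_G^*T_F)\big)=n$ for any dual pair is the $m=N$ instance and the algebraic engine here.)

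\emph{Step 3 (from $\ell^2$ to $\ell^p$, and the nested definition).} For $p\ge2$ the power-mean inequality gives
\[
\mathrm{AE}_{\mathfrak F}^{(m),p}(F,G)=\Big(\tfrac1{\binom Nm}\sum_\Lambda\big(\|E_{\Lambda,F,G}\|_{\mathfrak F}^2\big)^{p/2}\Big)^{1/p}\ \ge\ \Big(\tfrac1{\binom Nm}\sum_\Lambda\|E_{\Lambda,F,G}\|_{\mathfrak F}^2\Big)^{1/2}\ \ge\ \sqrt C ,
\]
and equality in the last inequality forces $G=S_F^{-1}F$ when $m\le N-1$; together with Step 1 this yields $\mathrm{AE}_{\mathfrak F}^{(m),p}(F)=\sqrt C$ with the canonical dual the unique optimizer. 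The case $m=N$ is degenerate ($E_{\Lambda,F,G}=T_G^*T_F=I$ for every dual, consistent with $C=n$), so uniqueness there — and the consistency of the nested definition of $\zeta_{\mathfrak F}^{(m),p}(F)$ at every level — follows by induction on $m$ from $\zeta_{\mathfrak F}^{(m-1),p}(F)=\{S_F^{-1}F\}$. The principal obstacle is the sub-range $0<p<2$: there $x\mapsto x^{p/2}$ is concave, so the power-mean step above runs the wrong way and one cannot simply reduce to the $\ell^2$-average; this range must instead be handled by a finer, term-by-term comparison that plays the \emph{exact} constancy of $\|E_\Lambda\|_{\mathfrak F}$ for the canonical dual against the perturbed values — for instance through Theorem~\ref{lemma2point4} applied with exponent $p/2$ — and that is where the real work lies.
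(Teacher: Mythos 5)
Your Steps 1 and 2 are correct, and Step 2 is a genuinely different (and cleaner) route than the paper's: instead of your double-counting identity $\sum_{\Lambda} D_\Lambda T_FT_F^*D_\Lambda=\binom{N-2}{m-2}T_FT_F^*+\binom{N-2}{m-1}\beta I_N$, which gives the exact perturbation formula
$\frac{1}{\binom Nm}\sum_\Lambda\|E_{\Lambda,F,G}\|_{\mathfrak F}^2=C+\frac{\binom{N-2}{m-1}\beta}{\binom Nm}\sum_i\|u_i\|^2$,
the paper never averages the squared errors of a general dual at level $m$. It exploits the nested definition of $\zeta_{\mathfrak F}^{(m),p}(F)$: any $m$-erasure optimal dual is in particular $1$-erasure optimal, the $1$-erasure equality chain forces $\|f_i\|\,\|g_i\|=|\langle f_i,g_i\rangle|=\frac nN$ for all $i$, the trace identity $\sum_{i,j}\langle g_i,g_j\rangle\langle f_j,f_i\rangle=n$ then pins down the off-diagonal sum, and uniqueness is settled already at level $1$ from $\sum_i\langle f_i,u_i\rangle=0$. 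Your identity is a nice quantitative substitute for that reduction, and your handling of the degenerate case $m=N$ via nestedness matches the role nestedness plays in the paper.

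The genuine gap is the one you flag yourself: Step 3 only works for $p\geq 2$, while the statement claims all $p>0$, and your route as written already fails on $1<p<2$, a range the paper's argument does cover. The fix is not a ``finer term-by-term comparison'' at level $m$: it is to argue at the $1$-erasure level with the exponent $p$ itself, exactly as the paper does. For $p>1$, the power-mean inequality gives
$\left\{\tfrac1N\sum_i(\|f_i\|\,\|g_i\|)^p\right\}^{1/p}\geq\tfrac1N\sum_i\|f_i\|\,\|g_i\|\geq\tfrac1N\sum_i|\langle f_i,g_i\rangle|\geq\tfrac nN$,
equality forces all $\|f_i\|\,\|g_i\|$ equal to $\frac nN$, hence $\|\tfrac1A f_i+u_i\|^2=\tfrac n{AN}$ for every $i$; summing and using $\sum_i\langle f_i,u_i\rangle=\operatorname{tr}(T_F^*T_U)=0$ yields $\sum_i\|u_i\|^2=0$, so $\zeta_{\mathfrak F}^{(1),p}(F)=\{S_F^{-1}F\}$, and the nested definition together with your Step 1 (constancy of the per-pattern error of the canonical dual, which is where equiangularity enters) finishes every $m$ and every such $p$ without any Jensen step at level $m$. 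Note also that neither your argument nor the paper's covers $0<p\leq 1$, where the power-mean inequality reverses; that overstatement is inherited from the theorem's hypothesis ``$p>0$'' rather than introduced by you, but within the claimed range the missing piece of your proposal is precisely the level-$1$ argument above for $1<p<2$.
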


   \begin{proof}
We first prove the result for the case \( m = 1 \). Let \( F \) be a uniform tight frame with frame bound \( A \). Then each frame vector satisfies \( \|f_i\| = \sqrt{\frac{An}{N}} \) for all \( 1 \leq i \leq N \). The canonical dual frame is \( S_F^{-1}F = \left\{ \frac{1}{A}f_i \right\}_{i=1}^N \). Then the average Frobenius-norm error becomes
\[
\mathrm{AE}_{\mathfrak{F}}^{(1),p}(F, S_F^{-1}F) = \left\{ \frac{1}{N} \sum_{i=1}^N \left\|f_i\right\|^p \left\|\frac{1}{A} f_i \right\|^p \right\}^{1/p} = \left\{ \frac{1}{N A^p} \sum_{i=1}^N \|f_i\|^{2p} \right\}^{1/p}.
\]
Since \( \|f_i\| = \sqrt{\frac{An}{N}} \), we obtain
\[
\mathcal{F}_1^p(F, S_F^{-1}F) = \left\{ \frac{1}{N A^p} \cdot N \left( \frac{An}{N} \right)^p \right\}^{1/p} = \frac{n}{N}.
\]
Hence by \eqref{eqn4.6}, the canonical dual $S_F^{-1}F$ is a $1-$erasure optimal dual of \( F \) under the Frobenius norm.

Now consider the case \( m > 1 \). Let \( G \in \zeta_{\mathfrak{F}}^{(m),p}(F) \). Then \( G \in \zeta_{\mathfrak{F}}^{(1),p}(F) \), and thus,
\[
\mathrm{AE}_{\mathfrak{F}}^{(1),p}(F, G)=\left\{ \frac{1}{N} \sum_{i=1}^N \|f_i\|^p \|g_i\|^p \right\}^{1/p} = \frac{n}{N}.
\]
Now,
\[
N \left(\frac{n}{N}\right)^p = \sum_{i=1}^N \|f_i\|^p \|g_i\|^p \geq N \left( \frac{\sum_{i=1}^N \|f_i\| \|g_i\|}{N} \right)^p \geq N \left( \frac{\sum_{i=1}^N |\langle f_i, g_i \rangle|}{N} \right)^p \geq N \left( \frac{n}{N} \right)^p.
\]
Thus, equality holds throughout, which leads to $\|f_i\| \|g_i\| = |\langle f_i, g_i \rangle| = \frac{n}{N}, \quad \forall i.$ For a frame $F=\{f_i\}_{i=1}^N$ and a dual $G=\{g_i\}_{i=1}^N,$ we have

\[
\sum_{i,j=1}^N \langle g_i, g_j \rangle \langle f_j, f_i \rangle = \sum_{i=1}^N \left\langle g_i, \sum_{j=1}^N \langle f_i, f_j \rangle g_j \right\rangle = \sum_{i=1}^N \langle g_i, f_i \rangle = n.
\]
Also, since \( \|f_i\| \|g_i\| = \frac{n}{N} \), we derive:
\begin{align} \label{equation3point8}
2\,\text{Re} \left( \sum_{\substack{i,j=1 \\ i > j}}^N \langle g_i, g_j \rangle \langle f_j, f_i \rangle \right) = \sum_{i \neq j} \langle g_i, g_j \rangle \langle f_j, f_i \rangle = n - \frac{n^2}{N}.
\end{align}

Now, from equation~\eqref{eqn4point8}, 
\begin{align*}
\mathrm{AE}_{\mathfrak{F}}^{(m),p}(F, G)
&= \left\{ \frac{1}{\binom{N}{m}} \sum_{D \in \mathcal{D}^{(m)}} \left( \sum_{r=1}^m \|g_{i_r}\|^2 \|f_{i_r}\|^2 + \sum_{\substack{j,k=1 \\ j \neq k}}^m \langle g_{i_j}, g_{i_k} \rangle \langle f_{i_k}, f_{i_j} \rangle \right)^{p/2} \right\}^{1/p} \\
&= \left\{ \frac{1}{\binom{N}{m}} \sum_{D \in \mathcal{D}^{(m)}} \left( m \left( \frac{n}{N} \right)^2 + \sum_{\substack{j,k=1 \\ j \neq k}}^m \langle g_{i_j}, g_{i_k} \rangle \langle f_{i_k}, f_{i_j} \rangle \right)^{p/2} \right\}^{1/p} \\
&\geq \left( m \left( \frac{n}{N} \right)^2 + \frac{m(m-1)}{N(N-1)} \sum_{i \neq j} \langle g_i, g_j \rangle \langle f_j, f_i \rangle \right)^{1/2} \\
&= \sqrt{ m\left( \frac{n}{N} \right)^2 + \frac{m(m-1)(nN - n^2)}{N^2(N-1)} }.
\end{align*}

Thus, $\mathrm{AE}_{\mathfrak{F}}^{(1),p}(F) \geq \sqrt{ m\left( \dfrac{n}{N} \right)^2 + \dfrac{m(m-1)(nN - n^2)}{N^2(N-1)} }.$ Also,
\begin{align*}
\mathrm{AE}_{\mathfrak{F}}^{(1),p}(F, S_{F}^{-1}F)
&= \left\{ \frac{1}{\binom{N}{m}} \sum_{D \in \mathcal{D}^{(m)}} \left( \sum_{r=1}^m \frac{1}{A^2} \|f_{i_r}\|^4 + \frac{1}{A^2} \sum_{\substack{j,k=1 \\ j \neq k}}^m |\langle f_{i_j}, f_{i_k} \rangle|^2 \right)^{p/2} \right\}^{1/p} \\
&= \sqrt{ m\left( \dfrac{n}{N} \right)^2 + \dfrac{m(m-1)(nN - n^2)}{N^2(N-1)} }.
\end{align*}

Hence, $ \mathrm{AE}_{\mathfrak{F}}^{(m),p}(F) = \sqrt{ m\left( \dfrac{n}{N} \right)^2 + \dfrac{m(m-1)(nN - n^2)}{N^2(N-1)} }.$  It remains to prove the uniqueness. Suppose there exists another optimal dual \( \tilde{G} = \left\{ \frac{1}{A} f_i + u_i \right\}_{i=1}^N \) satisfying \(  \mathrm{AE}_{\mathfrak{F}}^{(1),p}(F, \tilde{G}) = \mathrm{AE}_{\mathfrak{F}}^{(1),p}(F) = \frac{n}{N} \). Therefore, $\frac{n}{N}=\left\{\frac{1}{N} \sum \limits_{i=1}^N\|f_i\|^p\,\| \tilde{g}_i \| ^p\right\}^{\frac{1}{p}} \geq \dfrac{\sum \limits_{i=1}^N\|f_i\|^p\,\| \tilde{g}_i \|}{N} \geq \frac{n}{N}. $ This leads to $\left\{\frac{1}{N} \sum \limits_{i=1}^N\|f_i\|^p\,\| \tilde{g}_i \| ^p\right\}^{\frac{1}{p}} = \dfrac{\sum \limits_{i=1}^N\|f_i\|\,\| g_i \|}{N} = \frac{n}{N}.$ This is possible if and only if  each $\|f_i\|\,\| \tilde{g}_i \|$ are equal and hence, $\|f_i\|\,\| \tilde{g}_i \| = \|f_i\|\left\| \frac{1}{A}f_i + u_i \right\|= \frac{n}{N},$ for all $1 \leq i \leq N.$ This leads to, $\left\| \frac{1}{A}f_i + u_i \right\| = \sqrt{\frac{n}{AN}},\;1 \leq i \leq N.$ Explicitly, $\frac{1}{A^2}\|f_i\|^2 + \frac{2}{A} Re\langle f_i, u_i \rangle + \|u_i\|^2 = \frac{n}{AN},\;1 \leq i \leq N.$ Thus, $\frac{1}{A^2}\sum\limits_{1 \leq i \leq N} \|f_i\|^2 + \frac{2}{A} \sum\limits_{1 \leq i \leq N} Re\langle f_i, u_i \rangle + \sum\limits_{1 \leq i \leq N} \|u_i\|^2  = \frac{n}{A}.$ Consequently,
 \begin{align}\label{equation9}
 	\frac{2}{A} \sum\limits_{1 \leq i \leq N} Re\langle f_i, u_i \rangle + \sum\limits_{1 \leq i \leq N} \|u_i\|^2 = 0.
 \end{align}
As $\tilde{G}$ is a dual of $F,$  $\sum\limits_{1\leq i \leq N} \langle f,f_i \rangle u_i =0,$ for all $f \in \mathcal{H}_n.$ This implies, $T_{U}^* T_F = 0,$ where $U= \{u_i\}_{i=1}^N.$ Therefore, $0 = tr(T_{U}^* T_F) = tr(T_F T_{U}^*) = \sum\limits_{1\leq i \leq N} \langle f_i,u_i \rangle. $ So, $Re\left( \sum\limits_{1\leq i \leq N} \langle f_i,u_i \rangle \right) =0.$ Hence, by \eqref{equation9}, $\sum\limits_{1\leq i \leq N} \|u_i\|^2 = 0,$ which implies, $u_i = 0,$ for all $1 \leq i \leq N.$ Thus, $\tilde{G} = S_{F}^{-1}F $.

\end{proof}

The next corollary follows directly from the preceding theorem. It shows that for an equiangular tight frame, the canonical dual serves as the unique Frobenius-norm optimal dual frame, regardless of the number of erasures.

\begin{cor}
Suppose $F = \{f_i\}_{i=1}^N$ is a uniform tight frame for $\mathcal{H}_n$ with the property that $|\langle f_i, f_j \rangle|$ is a constant for all \( i \neq j \). Then every element in $ \zeta_{\mathfrak{F}}^{(1),p}(F)$ is a $1-$uniform dual of $F$.
\end{cor}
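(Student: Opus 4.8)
The plan is to extract the corollary from the equality analysis already built into the chain \eqref{eqn4.6}, rather than re-proving anything. Fix an arbitrary $G=\{g_i\}_{i=1}^N\in\zeta_{\mathfrak{F}}^{(1),p}(F)$. By Theorem \ref{thm3point4} applied with $m=1$, the optimal value is $\mathrm{AE}_{\mathfrak{F}}^{(1),p}(F)=\tfrac nN$, so $\mathrm{AE}_{\mathfrak{F}}^{(1),p}(F,G)=\tfrac nN$; consequently every one of the three inequalities displayed in \eqref{eqn4.6} is forced to be an equality for this particular $G$.

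I would then read off those three equality cases in turn. Equality in the power-mean step (here the standing hypothesis $p>1$ is exactly what makes that step nontrivial) forces the numbers $\|f_i\|\,\|g_i\|$ to be independent of $i$; term-by-term equality in the Cauchy--Schwarz step forces $|\langle f_i,g_i\rangle|=\|f_i\|\,\|g_i\|$ for every $i$, so all of the $|\langle f_i,g_i\rangle|$ equal one common value $t$; and equality in the last step forces $\sum_{i=1}^N|\langle f_i,g_i\rangle|=n$, whence $Nt=n$ and $t=\tfrac nN$. To upgrade ``constant modulus'' to ``constant value'', I invoke the identity $\sum_{i=1}^N\langle f_i,g_i\rangle=\operatorname{tr}(T_GT_F^*)=n$ recorded in Section~2: this exhibits $n$ as a sum of $N$ complex numbers each of modulus $\tfrac nN$ whose total has modulus $n=N\cdot\tfrac nN$, so we sit in the equality case of the triangle inequality, which forces $\langle f_i,g_i\rangle=\tfrac nN$ for every $i$. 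Hence $\langle f_i,g_i\rangle$ is constant in $i$, i.e. $G$ is a $1$-uniform dual of $F$ (with the constant $\tfrac nN$ of Remark \ref{rem3point2}), which is precisely the assertion.

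I do not expect a genuine obstacle here: all of the analytic work was already spent on \eqref{eqn4.6} and Theorem \ref{thm3point4}, and the corollary amounts to bookkeeping on the equality cases. The only points worth a moment's care are (a) ensuring the power-mean inequality is used in the regime $p>1$, where its equality case is ``all terms equal'', and (b) the final triangle-inequality argument, the one place where one must reason from the trace identity $\sum_i\langle f_i,g_i\rangle=n$ rather than from the erasure functional itself. As a cross-check, one can alternatively just quote the uniqueness half of Theorem \ref{thm3point4}: it yields $\zeta_{\mathfrak{F}}^{(1),p}(F)=\{S_F^{-1}F\}$, and for a uniform tight frame with bound $A$ one has $S_F=AI$ and $\|f_i\|^2=\tfrac{An}{N}$, so $\langle f_i,S_F^{-1}f_i\rangle=A^{-1}\|f_i\|^2=\tfrac nN$ independently of $i$ — the canonical dual is patently $1$-uniform.
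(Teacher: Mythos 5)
Your argument is correct and essentially the paper's own: the paper likewise extracts $\|f_i\|\,\|g_i\| = |\langle f_i,g_i\rangle| = \tfrac{n}{N}$ from the equality analysis in the proof of Theorem~\ref{thm3point4} and then uses $\sum_{i=1}^N\langle f_i,g_i\rangle = n$ (written out via real and imaginary parts, which is exactly the triangle-inequality equality case you invoke) to force $\langle f_i,g_i\rangle = \tfrac{n}{N}$ for every $i$. Your cross-check via the uniqueness part of Theorem~\ref{thm3point4} is also valid, though the paper does not take that shortcut.
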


\begin{proof}
From the proof of Theorem~\ref{thm3point4}, if $G$ is a $1$-erasure optimal dual frame of $F$ under the Frobenius norm, then $\|f_i\| \|g_i\| = |\langle f_i, g_i \rangle| = \frac{n}{N}, \quad \forall\, i.$ Let $\langle f_j, g_j \rangle = a_j + i b_j$ for $1 \leq j \leq N$. Then $\sum_{j=1}^N a_j = n, \quad  \sum_{j=1}^N b_j = 0, \quad \text{and} \quad  \sqrt{a_j^2 + b_j^2} = \frac{n}{N} \quad \text{for all } 1 \leq j \leq N.$ It follows that $\sum_{j=1}^N a_j = \sum_{j=1}^N \sqrt{a_j^2 + b_j^2},$ which implies $b_j = 0$ for all $j$. Suppose, for contradiction, that $a_j < \frac{n}{N}$ for some $j$. Then there must exist an index $\ell$ such that $a_\ell > \frac{n}{N}$, which gives $|\langle f_\ell, g_\ell \rangle| = \sqrt{a_\ell^2 + b_\ell^2} > \frac{n}{N},$ a contradiction. Hence, $a_j = \frac{n}{N}$ and $b_j = 0$ for all $1 \leq j \leq N$. Consequently, $\langle f_j, g_j \rangle = \frac{n}{N}, \quad \forall\, 1 \leq j \leq N,$ showing that $G$ is a one-uniform dual frame of $F$.
\end{proof}

\section{Numerical Radius as Error Measure}

In this section, we consider the \emph{numerical radius} as the measure of error for the error operator. Let \( F = \{f_i\}_{i=1}^N \) be a frame for a finite-dimensional Hilbert space \( \mathcal{H}_n \), and let \( G = \{g_i\}_{i=1}^N \) be a dual frame of \( F \). The numerical radius of a bounded operator \( T \) is given by $\omega(T) := \sup \left\{ \left| \langle Tf, f \rangle \right| : \|f\| = 1 \right\}.$ Given the error operator \( E_{\Lambda, F, G} := T_G^* D T_F \) associated with an erasure location \( \Lambda \subset \{1, \dots, N\} \) of size \( m \), we define the average numerical-radius-based error over all \( m-\)erasure patterns as
\[
\mathrm{AE}_{\mathcal{N}}^{(m),p}(F, G) := \left\{ \frac{1}{\binom{N}{m}} \sum_{D \in \mathcal{D}^{(m)}} \left[ \omega\left( T_G^* D T_F \right) \right]^p \right\}^{1/p},
\]
where \( \mathcal{D}^{(m)} \) denotes the collection of all \( N \times N \) diagonal matrices whose diagonal entries consist of exactly \( m \) ones and \( N - m \) zeros. The corresponding optimal error value over all duals of \( F \) is defined as
\[
\mathrm{AE}_{\mathcal{N}}^{(1),p}(F) := \inf \left\{ \mathrm{AE}_{\mathcal{N}}^{(1),p}(F, G) : G \text{ is a dual of } F \right\},
\]
and the set of duals achieving this infimum is denoted by
\[
\zeta_{\mathcal{N}}^{(1),p}(F) := \left\{ G : \mathrm{AE}_{\mathcal{N}}^{(1),p}(F, G) = \mathrm{AE}_{\mathcal{N}}^{(1),p}(F) \right\}.
\]
Every element in \( \zeta_{\mathcal{N}}^{(1),p}(F) \) is called a \textit{\( 1-\)erasure numerically optimal dual of \( F \)}.

\noindent For \( m = 1 \), if the error occurs at the \( i^{\text{th}} \) position, the error operator becomes $E_{\Lambda, F, G} f = \langle f, f_i \rangle g_i.$ By Lemma 2.1 in \cite{chien}, the numerical radius of this operator is $\omega\left( E_{\Lambda, F, G} \right) = \frac{\left| \langle f_i, g_i \rangle \right| + \|f_i\| \cdot \|g_i\|}{2}.$
Hence, the average numerical error for \( m = 1 \) becomes
\begin{equation}\label{eqn5point14}
   \mathrm{AE}_{\mathcal{N}}^{(1),p}(F, G) = \left\{ \frac{1}{N} \sum_{i=1}^N \left( \frac{|\langle f_i, g_i \rangle| + \|f_i\| \cdot \|g_i\|}{2} \right)^p \right\}^{1/p}. 
\end{equation}

   \begin{prop}
    Let \( F = \{f_i\}_{i=1}^N \) be a uniform Parseval frame for a Hilbert space \( \mathcal{H}_n \). Then the value of \( \mathrm{AE}_{\mathcal{N}}^{(1),p}(F) \) is \( \dfrac{n}{N} \), and the canonical dual is the unique 1-erasure numerically optimal dual of \( F \).
\end{prop}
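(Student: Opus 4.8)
The plan is to identify the value $n/N$ by exhibiting the canonical dual as a minimizer and proving a matching lower bound valid for \emph{every} dual of $F$, and then to obtain uniqueness by chasing the equality cases in that lower bound.

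First I would record the structural consequences of $F$ being a uniform Parseval frame: $S_F = I$, so the canonical dual $S_F^{-1}F$ equals $F$ itself; and $\sum_{i=1}^N \|f_i\|^2 = \operatorname{tr}(S_F) = n$ combined with uniformity gives $\|f_i\|^2 = n/N$ for every $i$. Substituting $G = F$ into \eqref{eqn5point14} and using $\langle f_i, f_i\rangle = \|f_i\|^2 = n/N$ and $\|f_i\|\cdot\|f_i\| = n/N$, each summand equals $\big(\tfrac{n/N + n/N}{2}\big)^p = (n/N)^p$, so $\mathrm{AE}_{\mathcal{N}}^{(1),p}(F, F) = n/N$.

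Next, for an arbitrary dual $G = \{g_i\}_{i=1}^N$ of $F$ I would bound \eqref{eqn5point14} from below by three successive inequalities. Put $a_i := \tfrac{1}{2}\big(|\langle f_i, g_i\rangle| + \|f_i\|\,\|g_i\|\big)$. (i) By the power-mean inequality (equivalently convexity of $x \mapsto x^p$ for $p > 1$), $\big\{\tfrac1N\sum_i a_i^p\big\}^{1/p} \ge \tfrac1N\sum_i a_i$. (ii) By Cauchy--Schwarz, $\|f_i\|\,\|g_i\| \ge |\langle f_i, g_i\rangle|$, hence $a_i \ge |\langle f_i, g_i\rangle|$. (iii) By the triangle inequality and the dual-pair identity $\sum_{i=1}^N \langle f_i, g_i\rangle = n$, $\tfrac1N\sum_i |\langle f_i, g_i\rangle| \ge \tfrac1N\big|\sum_i \langle f_i, g_i\rangle\big| = n/N$. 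Chaining these gives $\mathrm{AE}_{\mathcal{N}}^{(1),p}(F, G) \ge n/N$ for every dual $G$; together with the previous paragraph this yields $\mathrm{AE}_{\mathcal{N}}^{(1),p}(F) = n/N$, attained at $F = S_F^{-1}F$.

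For uniqueness, suppose $G$ is a dual with $\mathrm{AE}_{\mathcal{N}}^{(1),p}(F, G) = n/N$; then all three inequalities above must be equalities. Equality in (i) forces all $a_i$ equal, and since their $\ell^p$-mean is $n/N$ this common value is $n/N$. Equality in (ii) forces $\|f_i\|\,\|g_i\| = |\langle f_i, g_i\rangle|$ for each $i$, so by the Cauchy--Schwarz equality condition (the $f_i$ are nonzero since $\|f_i\|^2 = n/N > 0$) there are scalars $\lambda_i$ with $g_i = \lambda_i f_i$. Then $a_i = |\langle f_i, g_i\rangle| = n/N$, and equality in (iii) forces all the numbers $\langle f_i, g_i\rangle$ to share a common argument; since their sum is the positive real $n$, that argument is $0$, so $\langle f_i, g_i\rangle = n/N$ for all $i$. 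Finally $\langle f_i, g_i\rangle = \overline{\lambda_i}\,\|f_i\|^2 = \overline{\lambda_i}\,\tfrac{n}{N}$, so $\lambda_i = 1$ and $g_i = f_i$ for every $i$, i.e. $G = S_F^{-1}F$. The only delicate point is this last step: one must extract the three equality conditions in the right order and, crucially, observe that the common argument in (iii) is forced to be zero by the dual identity in order to pin down each $\lambda_i$ exactly; the rest is direct computation.
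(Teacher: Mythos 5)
Your proposal is correct, and its first two thirds (computing $\mathrm{AE}_{\mathcal{N}}^{(1),p}(F,F)=\tfrac{n}{N}$ and the lower bound via the chain power-mean $\to$ Cauchy--Schwarz $\to$ triangle inequality with $\sum_i\langle f_i,g_i\rangle=n$) coincide with the paper's argument, up to the order in which the three inequalities are applied. Where you genuinely diverge is the uniqueness step. You exploit the equality case of Cauchy--Schwarz directly: $|\langle f_i,g_i\rangle|=\|f_i\|\,\|g_i\|$ with $f_i\neq 0$ gives $g_i=\lambda_i f_i$, and then $\langle f_i,g_i\rangle=\overline{\lambda_i}\,\tfrac{n}{N}=\tfrac{n}{N}$ pins $\lambda_i=1$, so $G=F$ in a few lines. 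The paper instead never invokes proportionality: it writes the optimal dual as $\tilde g_i=S_F^{-1}f_i+u_i$ with $\sum_i\langle f,f_i\rangle u_i=0$, extracts $\langle g_i,f_i\rangle=\tfrac{n}{N}$ and $\|f_i\|\,\|g_i\|=\tfrac{n}{N}$ exactly as you do, then kills the perturbation by trace identities ($\operatorname{tr}(T_F^*T_U)=0$, a sign argument forcing $\langle u_i,f_i\rangle=0$, and summing $\|u_i\|^2+2\,\mathrm{Re}\langle u_i,S_F^{-1}f_i\rangle=0$ to get $\sum_i\|u_i\|^2=0$). Your route is shorter and cleaner in this Parseval setting (where $S_F=I$ makes the paper's $S_F^{-1/2}$ manipulations superfluous); the paper's perturbation-plus-trace template is heavier here but is the same machinery it reuses for the uniqueness claims in the Frobenius-norm results (e.g.\ Theorem~\ref{thm3point4}), where one only controls norms and no Cauchy--Schwarz equality, hence no proportionality, is available. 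Both arguments correctly use strict convexity of $x\mapsto x^p$ for $p>1$ (the paper via Theorem~\ref{lemma2point4}) to force the summands to be equal, so there is no gap in your reasoning.
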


\begin{proof}
Since \( F \) is a uniform Parseval frame, we have \( \|f_i\| = \sqrt{\frac{n}{N}} \) for all \( i = 1, \dots, N \). For any dual frame \( G = \{g_i\}_{i=1}^N \) of \( F \), by \eqref{eqn5point14}, we have

\[
\mathrm{AE}_{\mathcal{N}}^{(1),p}(F, G) = \left\{ \frac{1}{N} \sum_{i=1}^N \left( \frac{|\langle g_i, f_i \rangle| + \|g_i\| \cdot \|f_i\|}{2} \right)^p \right\}^{1/p}.
\]
Using the inequalities $\dfrac{|\langle g_i, f_i \rangle| + \|g_i\| \cdot \|f_i\|}{2} \geq |\langle g_i, f_i \rangle| \quad \text{and} \quad \sum_{i=1}^N |\langle g_i, f_i \rangle| \geq \left| \sum_{i=1}^N \langle g_i, f_i \rangle \right| = n,$ we obtain $\mathrm{AE}_{\mathcal{N}}^{(1),p}(F, G) \geq \frac{n}{N}.$ If we take $G$ as $S_{F}^{-1}F= F$, then $\mathrm{AE}_{\mathcal{N}}^{(1),p}(F, F) = \left\{ \frac{1}{N} \sum_{i=1}^N \left( \|f_i\|^2 \right)^p \right\}^{1/p} = \frac{n}{N}. $ Hence, $\mathrm{AE}_{\mathcal{N}}^{(1),p}(F) = \frac{n}{N}.$

Now, let \( \tilde{G} = \{\tilde{g}_i\} = \{S_F^{-1} f_i + u_i\} \in \zeta_{\mathcal{N}}^{(1),p}(F) \), where \( \{u_i\} \) satisfies \( \sum_{i=1}^N \langle f, f_i \rangle u_i = 0 \) for all \( f \in \mathcal{H}_n \). Then, $\mathrm{AE}_{\mathcal{N}}^{(1),p}(F, \tilde{G}) = \frac{n}{N}.$ This gives, 
        $$\frac{n}{N}= \left\{\frac{1}{N} \sum\limits_{i=1}^N\left( \frac{|\langle g_i,f_i\rangle| + \|g_i\|\|f_i\|}{2}\right)^p\right\}^{\frac{1}{p}} \geq \left\{\frac{1}{N} \sum\limits_{i=1}^N|\langle g_i,f_i\rangle|^p\right\}^{\frac{1}{p}} \geq \dfrac{\sum\limits_{i=1}^N|\langle g_i,f_i\rangle|}{N} \geq \dfrac{n}{N}.  $$
       Therefore,  $\dfrac{|\langle g_i,f_i\rangle| + \|g_i\|\|f_i\|}{2}$ is a constant, for all $i.$   This leads to,  $\dfrac{|\langle g_i,f_i\rangle| + \|g_i\|\|f_i\|}{2} = \frac{n}{N},\,1\leq i \leq N.$ Now, $n= \sum\limits_{i=1}^N \langle g_i,f_i \rangle \leq \sum\limits_{i=1}^N \left|\langle g_i,f_i \rangle \right| \leq \sum\limits_{i=1}^N \dfrac{|\langle g_i,f_i\rangle| + \|g_i\|\|f_i\|}{2} = n. $ Therefore, $\sum\limits_{i=1}^N \left|\langle g_i,f_i \rangle \right| = \sum\limits_{i=1}^N \dfrac{|\langle g_i,f_i\rangle| + \|g_i\|\|f_i\|}{2} = n.$ Using the fact $\left|\langle g_i,f_i \rangle \right| \leq \|f_i\|\,\|g_i\|,$ we have 
\begin{align}\label{equation10}
	\left|\langle g_i,f_i \rangle \right| = \|f_i\|\,\|g_i\| = \frac{n}{N} ,\;\forall 1\leq i \leq N.
\end{align}

Let $\langle g_j,f_j\rangle = a_j +ib_j,\,$ where $i = \sqrt{-1}.$ Then $\sum\limits_{j=1}^N a_j =n,\,\sum\limits_{j=1}^N b_j=0.$ Therefore by \eqref{equation10}, we have $\sqrt{a^2_{j}+ b^2_{j} } = \frac{n}{N}, \,\forall j, $ which shows that $b_j =0\,\forall j.$ Thus, $\sum\limits_{j=1}^N a_j = \sum\limits_{j=1}^N \left|a_j \right| $ and hence, $a_j \geq 0 \forall j.$ Therefore, $\langle g_j,f_j\rangle= \frac{n}{N}, \forall j.$ 

Now, $\sum\limits_{i=1}^N \langle f_i,u_i \rangle = tr(T_U T_{F}^*)= tr(T_{F}^* T_U) = 0.$  Also, $\frac{n}{N} = |\langle g_i,f_i\rangle| = \left| \|  S_{F}^{-\frac{1}{2}}f_i \|^2 + \langle u_i,f_i\rangle    \right| = \left| \frac{n}{N} + \langle u_i,f_i\rangle   \right| ,\, 1 \leq i \leq N.$ This implies that,
 	\begin{align}\label{eqn5point14}
 		\sqrt{\left( \frac{n}{N} + Re \langle u_i,f_i\rangle  \right)^2 + \left( Im \langle u_i,f_i\rangle \right)^2} = \frac{n}{N}, \;1 \leq i \leq N.
 	\end{align}

 Now, we will show that $\langle u_i,f_i\rangle =0,\,1 \leq i \leq N.$ If for any $j,\, Re \langle u_j,f_j\rangle < 0,$ then there exists a $1\leq j'\leq N,$ such that  $ Re \langle u_j',f_j'\rangle > 0,\,$ as $\sum\limits_{i=1}^N Re\left(\langle f_i,u_i \rangle \right) =0.$  This leads to, $\sqrt{\left( \frac{n}{N} + Re \langle u_j',f_j'\rangle  \right)^2 + \left( Im \langle u_j',f_j'\rangle \right)^2} > \frac{n}{N},$ which is a contradiction. Therefore, $ Re \langle u_i,f_i\rangle  = 0, \forall i.$ By \eqref{eqn5point14}, we have $ Im \langle u_i,f_i\rangle  = 0, \forall i$ and hence, $\langle u_i,f_i\rangle  = 0, \forall i.$
It is easy to see that $\left\{ S_{F}^{-\frac{1}{2}}f_i + S_{F}^{-\frac{1}{2}}u_i\right\}_{i=1}^N$ is a dual of $\left\{ S_{F}^{-\frac{1}{2}}f_i \right\}_{i=1}^N,$ as $\sum\limits_{i=1}^N \langle f, S_{F}^{-\frac{1}{2}}u_i \rangle S_{F}^{-\frac{1}{2}}f_i = S_{F}^{-\frac{1}{2}}\left(\sum\limits_{i=1}^N\langle S_{F}^{-\frac{1}{2}}f,u_i\rangle f_i  \right) = 0,\,\forall f \in \mathcal{H}_n. $ Then, $0=\sum\limits_{i=1}^N \langle S_{F}^{-\frac{1}{2}}u_i, S_{F}^{-\frac{1}{2}}f_i \rangle= \sum\limits_{i=1}^N \langle S_{F}^{-1}u_i, f_i \rangle. $ Now,
 	
 \begin{align*}
 &\;\;\;\;\;\;\;\;	\|f_i\|\,\|g_i\|  = \frac{n}{N} = 	\|f_i\|\,\|S_{F}^{-1}f_i\|, \,\forall i\\&
 	\implies \| S_{F}^{-1}f_i + u_i \|^2 = \|S_{F}^{-1}f_i\|^2,\,\forall i \\&
 	\implies \|u_i\|^2 + 2 Re \langle S_{F}^{-1}u_i, f_i \rangle =0,\,\forall i \\&
 	\implies \sum\limits_{i=1}^N \|u_i\|^2 + 2 Re \left(\sum\limits_{i=1}^N \langle S_{F}^{-1}u_i, f_i \rangle \right) =0 \\&
 	\implies  \sum\limits_{i=1}^N \|u_i\|^2 =0 \\&
 	\implies u_i =0,\, \forall i.
 \end{align*}
 	
 	Hence,  $G = S_{F}^{-1}F. $
    
\end{proof}

    \begin{rem}
         For  a uniform Parseval frame $F,$ the canonical dual $S_{F}^{-1}F =F$ is a $1-$erasure Numerically optimal dual frame of $F.$
    \end{rem}

The following proposition establishes that, for a tight frame, membership of the canonical dual in \( \zeta_{\mathcal{N}}^{(1),p}(F) \) implies its membership in \( \zeta_{\mathfrak{F}}^{(1),p}(F) \).

 \begin{prop}\label{prop6point2}
	Let $F$ be a tight frame for $\mathcal{H}_n.$ If $ S_{F}^{-1}F\in \zeta_{\mathcal{N}}^{(1),p}(F),$ then  $ S_{F}^{-1}F\in \zeta_{\mathfrak{F}}^{(1),p}(F).$ 
	
\end{prop}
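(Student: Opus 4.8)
The plan is to exploit the two explicit formulas already derived for the $1$-erasure errors: for a dual $G=\{g_i\}$ of $F$,
\[
\mathrm{AE}_{\mathcal{N}}^{(1),p}(F,G)=\Bigl\{\tfrac1N\sum_{i=1}^N\Bigl(\tfrac{|\langle f_i,g_i\rangle|+\|f_i\|\|g_i\|}{2}\Bigr)^p\Bigr\}^{1/p},
\qquad
\mathrm{AE}_{\mathfrak{F}}^{(1),p}(F,G)=\Bigl\{\tfrac1N\sum_{i=1}^N\bigl(\|f_i\|\|g_i\|\bigr)^p\Bigr\}^{1/p},
\]
together with the universal lower bound $\mathrm{AE}_{\mathfrak{F}}^{(1),p}(F,G)\geq n/N$ from \eqref{eqn4.6}. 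First I would observe that since $F$ is tight with frame bound $A$, we have $\|f_i\|^2=An/N$ and $S_F^{-1}F=\{\tfrac1A f_i\}$, so $\|f_i\|\cdot\|S_F^{-1}f_i\|=\tfrac1A\|f_i\|^2=n/N$ for every $i$; hence $\mathrm{AE}_{\mathfrak{F}}^{(1),p}(F,S_F^{-1}F)=n/N$, which already equals the lower bound. Thus $\mathrm{AE}_{\mathfrak{F}}^{(1),p}(F)=n/N$ and the canonical dual automatically lies in $\zeta_{\mathfrak{F}}^{(1),p}(F)$.

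The one genuine point to check is that this argument does not secretly require the numerical-radius hypothesis — i.e. I should make sure the conclusion is not vacuous or that the intended statement isn't actually "the numerical-radius-optimal value is also $n/N$." Reading the claim literally, the hypothesis $S_F^{-1}F\in\zeta_{\mathcal{N}}^{(1),p}(F)$ is used only to situate $S_F^{-1}F$ as a distinguished dual; the Frobenius conclusion follows from the tightness computation alone. So the proof is: compute $\mathrm{AE}_{\mathfrak{F}}^{(1),p}(F,S_F^{-1}F)=n/N$ using $\|f_i\|=\sqrt{An/N}$ and $S_F^{-1}f_i=\tfrac1A f_i$; invoke \eqref{eqn4.6} to see $n/N$ is a lower bound for $\mathrm{AE}_{\mathfrak{F}}^{(1),p}(F,G)$ over all duals $G$; conclude $\mathrm{AE}_{\mathfrak{F}}^{(1),p}(F)=n/N=\mathrm{AE}_{\mathfrak{F}}^{(1),p}(F,S_F^{-1}F)$, hence $S_F^{-1}F\in\zeta_{\mathfrak{F}}^{(1),p}(F)$.

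If instead the authors intend the hypothesis to do real work — for instance if "tight frame" in their usage does not force $\|f_i\|$ to be constant, so that the above shortcut fails — then the plan is to run the chain of inequalities in reverse starting from numerical-radius optimality. From $S_F^{-1}F\in\zeta_{\mathcal{N}}^{(1),p}(F)$ and the known optimal value, equality propagates through the inequalities $\tfrac{|\langle f_i,g_i\rangle|+\|f_i\|\|g_i\|}{2}\geq\|f_i\|\|g_i\|$ only if $|\langle f_i,g_i\rangle|=\|f_i\|\|g_i\|$, which pins down the geometry of $\{f_i,S_F^{-1}f_i\}$ (collinearity), and then one reads off that the Frobenius error of $S_F^{-1}F$ equals the common value $n/N$, again matching the lower bound \eqref{eqn4.6}. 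The main obstacle is simply disambiguating which of these the statement means; once "tight" is taken to include equal norms (the standard convention consistent with the surrounding results, e.g. Theorem \ref{thm3point4}), the proof is the three-line computation above and no real difficulty remains.
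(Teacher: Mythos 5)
Your main argument has a genuine gap: it silently upgrades the hypothesis ``tight frame'' to ``uniform tight frame.'' In the paper's terminology these are distinct (the proposition says only that $F$ is tight, while results that need equal norms, such as Theorem \ref{thm3point4}, explicitly say ``uniform tight''). For a general tight frame with bound $A$ one only has $\sum_{i=1}^N\|f_i\|^2=An$, not $\|f_i\|^2=An/N$ for each $i$, so your computation $\|f_i\|\,\|S_F^{-1}f_i\|=n/N$ fails, and in fact
\[
\mathrm{AE}_{\mathfrak{F}}^{(1),p}(F,S_F^{-1}F)=\Bigl\{\tfrac{1}{N}\sum_{i=1}^N\tfrac{1}{A^p}\|f_i\|^{2p}\Bigr\}^{1/p}>\tfrac{n}{N}
\]
whenever the $\|f_i\|$ are not all equal (strict power-mean inequality for $p>1$). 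Hence the canonical dual does \emph{not} in general attain the universal lower bound \eqref{eqn4.6}, your shortcut that makes the numerical-radius hypothesis ``do no real work'' collapses, and the conclusion cannot be obtained from tightness alone. Your fallback sketch does not repair this: it again asserts that the Frobenius error of $S_F^{-1}F$ equals $n/N$, it states the pointwise comparison with the wrong orientation (Cauchy--Schwarz gives $\frac{|\langle f_i,g_i\rangle|+\|f_i\|\|g_i\|}{2}\leq\|f_i\|\|g_i\|$, not $\geq$), and the ``collinearity'' it tries to extract is automatic for a tight frame since $S_F^{-1}f_i=\frac1A f_i$, so nothing is gained.

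The missing idea, which is exactly the paper's proof, is that for a tight frame the two error measures \emph{coincide on the canonical dual}: $|\langle S_F^{-1}f_i,f_i\rangle|=\frac1A\|f_i\|^2=\|S_F^{-1}f_i\|\,\|f_i\|$, so $\mathrm{AE}_{\mathfrak{F}}^{(1),p}(F,S_F^{-1}F)=\mathrm{AE}_{\mathcal{N}}^{(1),p}(F,S_F^{-1}F)$. Then the hypothesis is used essentially: for any dual $G$,
\[
\mathrm{AE}_{\mathfrak{F}}^{(1),p}(F,S_F^{-1}F)=\mathrm{AE}_{\mathcal{N}}^{(1),p}(F,S_F^{-1}F)\leq\mathrm{AE}_{\mathcal{N}}^{(1),p}(F,G)\leq\mathrm{AE}_{\mathfrak{F}}^{(1),p}(F,G),
\]
the last step being the pointwise bound $\frac{|\langle f_i,g_i\rangle|+\|f_i\|\|g_i\|}{2}\leq\|f_i\|\|g_i\|$. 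This chain, and not the value $n/N$, is what yields $S_F^{-1}F\in\zeta_{\mathfrak{F}}^{(1),p}(F)$; your argument as written proves the statement only in the uniform tight case, where the hypothesis is indeed superfluous.
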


\begin{proof}
    Let \( F \) be a tight frame with frame bound \( A \). Then, for any dual \( G \) of \( F \), we have
\begin{align*}
    \mathrm{AE}_{\mathfrak{F}}^{(1),p}(F, S_{F}^{-1}F) &=\left\{\frac{1}{N} \sum \limits_{i=1}^N\|f_i\|^p\,\|  S_{F}^{-1}f_i \| ^p\right\}^{\frac{1}{p}}\\&
    = \left\{\frac{1}{N} \sum\limits_{i=1}^N\dfrac{1}{A^p} \|f_i\|^{2p}\right\}^{\frac{1}{p}} \\&
    =  \mathrm{AE}_{\mathcal{N}}^{(1),p}(F, S_{F}^{-1}F) \\&
    \leq\mathrm{AE}_{\mathcal{N}}^{(1),p}(F, G) \\&
    \leq \left\{\frac{1}{N} \sum\limits_{i=1}^N\left( \frac{|\langle g_i,f_i\rangle| + \|g_i\|\|f_i\|}{2}\right)^p\right\}^{\frac{1}{p}}\\&
    \leq \left\{\frac{1}{N} \sum \limits_{i=1}^N\|f_i\|^p\,\|  g_i \| ^p\right\}^{\frac{1}{p}}\\&
    = \mathrm{AE}_{\mathfrak{F}}^{(1),p}(F, G) .
\end{align*}
 Hence, $ S_{F}^{-1}F\in \zeta_{\mathfrak{F}}^{(1),p}(F).$    
  
\end{proof}

\section{Spectral Radius as Error Measurement}

\noindent In this section, we investigate the problem of identifying dual frames that minimize the average reconstruction error when a fixed number of erasures occur, using the \textit{spectral radius} as the error measurement. Let $F = \{f_i\}_{i=1}^N$ be a frame for a finite-dimensional Hilbert space $\mathcal{H}_n$, and let $G = \{g_i\}_{i=1}^N$ be a dual frame of $F$. For $p > 1$ and $m \geq 1$, we define the average spectral-radius error over all $m$-erasure patterns as:
\begin{align*}
\mathrm{AE}_{\mathfrak{R}}^{(m),p}(F, G) 
&:= \left\{ \frac{1}{\binom{N}{m}} \sum\limits_{D \in \mathcal{D}^{(m)}} \left[ \rho\left( T_G^* D T_F \right) \right]^p \right\}^{1/p},
\end{align*}
where $\rho(\cdot)$ denotes the spectral radius and $\mathcal{D}^{(m)}$ is the set of all diagonal matrices in $\mathbb{C}^{N \times N}$ with exactly $m$ ones and $N-m$ zeros on the diagonal.

The corresponding optimal spectral error among all duals is given by
\begin{align*}
\mathrm{AE}_{\mathfrak{R}}^{(m),p}(F) 
&:= \inf \left\{ \mathrm{AE}_{\mathfrak{R}}^{(m),p}(F, G) : G \text{ is a dual of } F \right\}.
\end{align*}

The collection of all dual frames attaining this infimum is defined as
\begin{align*}
\zeta_{\mathfrak{R}}^{(m),p}(F) 
&:= \left\{ G : \mathrm{AE}_{\mathfrak{R}}^{(m),p}(F, G) = \mathrm{AE}_{\mathfrak{R}}^{(m),p}(F) \right\}.
\end{align*}

Each element of $\zeta_{\mathfrak{R}}^{(m),p}(F)$ is called a \textit{$m$-erasure spectrally optimal dual frame of $F$ under the spectral radius}.

\noindent \noindent
For a $(N,n)$ dual pair $(F,G),$ if the error occurs in the $i^{th}$ position, then $\rho\left(T_{G}^*DT_{F}\right) = \left|\langle f_i, g_i \rangle \right| $ and and thus the average spectral-radius error reduces to
	\begin{equation}\label{equation2}
		\mathrm{AE}_{\mathfrak{R}}^{(1),p} (F,G) = \left\{\frac{1}{N} \sum \left|\langle f_i, g_i \rangle \right|^p\right\}^{\frac{1}{p}}
	\end{equation}


In the study of erasures, it is particularly important to understand the structure of dual frames that minimize the spectral radius of the reconstruction error. For uniform frames, the following proposition characterizes the duals that are optimal for one erasure under the spectral radius.

\begin{prop}\label{prop3point2}
Let \( \mathcal{H}_n \) be an \( n-\)dimensional Hilbert space and let \( F = \{f_i\}_{i=1}^N \) be a \( 1-\)uniform frame for \( \mathcal{H}_n \). A dual frame \( G = \{g_i\}_{i=1}^N \) is a \( 1-\)erasure spectrally optimal dual of \( F \) if and only if \( G \) is a corresponding \( 1-\)uniform dual of $F.$
\end{prop}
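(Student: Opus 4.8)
The plan is to work directly from the simplified one-erasure formula \eqref{equation2}, namely $\mathrm{AE}_{\mathfrak{R}}^{(1),p}(F,G) = \bigl(\tfrac{1}{N}\sum_{i=1}^N |\langle f_i,g_i\rangle|^p\bigr)^{1/p}$, and to extract both implications from two elementary inequalities together with their equality cases. First I would record a universal lower bound: for \emph{any} dual $G$ of $F$, the power-mean inequality (valid since $p>1$) gives $\bigl(\tfrac1N\sum_i|\langle f_i,g_i\rangle|^p\bigr)^{1/p}\ge \tfrac1N\sum_i|\langle f_i,g_i\rangle|$, while the triangle inequality in $\mathbb{C}$ combined with the dual-pair identity $\sum_{i=1}^N\langle f_i,g_i\rangle=n$ yields $\tfrac1N\sum_i|\langle f_i,g_i\rangle|\ge \tfrac1N\bigl|\sum_i\langle f_i,g_i\rangle\bigr|=\tfrac nN$. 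Hence $\mathrm{AE}_{\mathfrak{R}}^{(1),p}(F,G)\ge \tfrac nN$ for every dual $G$, and any optimal dual must force equality in both steps.

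For the sufficiency direction, if $G$ is a $1$-uniform dual of $F$ then by Remark~\ref{rem3point2} we have $\langle f_i,g_i\rangle=\tfrac nN$ for all $i$, so $\mathrm{AE}_{\mathfrak{R}}^{(1),p}(F,G)=\tfrac nN$. Since $F$ is a $1$-uniform frame, its canonical dual $S_F^{-1}F$ is itself a $1$-uniform dual, which shows $\mathrm{AE}_{\mathfrak{R}}^{(1),p}(F)=\tfrac nN$; thus every $1$-uniform dual attains the infimum and is $1$-erasure spectrally optimal. For the necessity direction, assume $\mathrm{AE}_{\mathfrak{R}}^{(1),p}(F,G)=\tfrac nN$ so that both inequalities above are equalities. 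Equality in the power-mean inequality, by strict convexity of $t\mapsto t^p$ for $p>1$, forces $|\langle f_i,g_i\rangle|$ to be independent of $i$, say $|\langle f_i,g_i\rangle|=t$; equality in the triangle inequality, given that $\sum_i\langle f_i,g_i\rangle=n>0$ is real, forces every $\langle f_i,g_i\rangle$ to lie on the nonnegative real axis. Combining these, $\langle f_i,g_i\rangle=t$ for all $i$, and summing gives $Nt=n$, i.e. $\langle f_i,g_i\rangle=\tfrac nN$ is constant, so $G$ is a $1$-uniform dual of $F$.

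There is no deep obstacle in this argument; the only points that warrant explicit justification are the equality case of the complex triangle inequality (all summands lie on a common ray, which here must be $\mathbb{R}_{\ge 0}$ because the sum equals $n>0$) and the fact that the infimum $\mathrm{AE}_{\mathfrak{R}}^{(1),p}(F)$ is genuinely \emph{attained} and equals $\tfrac nN$ — this is precisely where the hypothesis that $F$ is a $1$-uniform frame enters, since it guarantees that the canonical dual already realizes the bound.
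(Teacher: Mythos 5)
Your proposal is correct and takes essentially the same route as the paper: both establish the universal lower bound $\frac{n}{N}$ via the power-mean inequality and the triangle inequality together with $\sum_{i=1}^N\langle f_i,g_i\rangle=n$, and both recover the converse by analyzing the equality cases to force $\langle f_i,g_i\rangle=\frac{n}{N}$ for all $i$. The only cosmetic difference is that the paper carries out the equality analysis by writing $\langle f_i,g_i\rangle=a_i+ib_i$ and comparing $\sum a_i$ with $\sum\sqrt{a_i^2+b_i^2}$, whereas you invoke the common-ray equality case of the complex triangle inequality; the content is identical.
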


\begin{proof}
Let \( \tilde{G} = \{\tilde{g}_i\}_{i=1}^N \) be a dual frame of \( F \). Then by definition,
\begin{align}\label{eqn3.3}
\mathrm{AE}_{\mathfrak{R}}^{(1),p}(F, \tilde{G}) = \left\{ \frac{1}{N} \sum_{i=1}^N \left| \langle f_i, \tilde{g}_i \rangle \right|^p \right\}^{1/p} \geq \left( \frac{1}{N} \sum_{i=1}^N \left| \langle f_i, \tilde{g}_i \rangle \right| \right) \geq \frac{1}{N} \left| \sum_{i=1}^N \langle f_i, \tilde{g}_i \rangle \right| = \frac{n}{N}.
\end{align}
The the last equality follows from the duality condition \( \sum_{i=1}^N \langle f_i, \tilde{g}_i \rangle = n \). Now, if \( G \) is a 1-uniform dual, then \( \langle f_i, g_i \rangle = \frac{n}{N} \) for all \( i \), and so
\[
\mathrm{AE}_{\mathfrak{R}}^{(1),p}(F, G) 
= \left\{ \frac{1}{N} \sum_{i=1}^N \left( \frac{n}{N} \right)^p \right\}^{1/p} 
= \frac{n}{N}.
\]
Hence, \( G \) is 1-erasure spectrally optimal dual of $F$.

Conversely, suppose \( G'' = \{g_i''\}_{i=1}^N \) is a 1-erasure spectrally optimal dual, so that
\(
\mathrm{AE}_{\mathfrak{R}}^{(1),p}(F, G'') = \frac{n}{N}.
\)
From \eqref{eqn3.3}, this implies equality in the chain of inequalities, which occurs only if \( \left| \langle f_i, g_i'' \rangle \right| = \frac{n}{N} \) for all \( i \). Write \( \langle f_i, g_i'' \rangle = a_i + ib_i \), with \( a_i, b_i \in \mathbb{R} \). Then
\[
\sum_{i=1}^N a_i = n \quad \text{and} \quad \sum_{i=1}^N \sqrt{a_i^2 + b_i^2} = n.
\]
Since \( \sqrt{a_i^2 + b_i^2} \geq a_i \), equality implies \( b_i = 0 \) and \( a_i \geq 0 \), hence \( \langle f_i, g_i'' \rangle = \frac{n}{N} \) for all \( i \), so \( G'' \) is also a 1-uniform dual of \( F \).
\end{proof}

As a direct consequence of the above Proposition the following corollary confirms that the canonical dual of a uniform tight frame is spectrally optimal for $1-$erasure.

\begin{cor}
Let \( F = \{f_i\}_{i=1}^N \) be a uniform tight frame for a Hilbert space \( \mathcal{H}_n \). Then the canonical dual frame \( S_F^{-1}F \) is a 1-erasure spectrally optimal dual of \( F \).
\end{cor}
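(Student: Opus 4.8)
The plan is to obtain the corollary as an immediate application of Proposition~\ref{prop3point2}; the only work is to check that a uniform tight frame falls under its hypotheses and that its canonical dual is the relevant $1$-uniform dual. So the strategy is: (i) unwind the structure of a uniform tight frame, (ii) verify that $\langle f_i, S_F^{-1}f_i\rangle$ is independent of $i$, hence that $F$ is a $1$-uniform frame and $S_F^{-1}F$ is a corresponding $1$-uniform dual, and (iii) invoke the ``if'' direction of Proposition~\ref{prop3point2}.

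First I would record that if $F=\{f_i\}_{i=1}^N$ is tight with frame bound $A$, then $S_F = A\,I_{\mathcal{H}_n}$, so the canonical dual is simply $S_F^{-1}f_i = \tfrac1A f_i$ for every $i$. Combining $\sum_{i=1}^N \|f_i\|^2 = \operatorname{tr}(S_F) = An$ with the uniformity assumption $\|f_1\| = \cdots = \|f_N\|$ gives $\|f_i\|^2 = \tfrac{An}{N}$ for all $i$. Then I would compute $\langle f_i, S_F^{-1}f_i\rangle = \tfrac1A\|f_i\|^2 = \tfrac nN$, a quantity independent of $i$. This says precisely that the canonical dual $S_F^{-1}F$ is a $1$-uniform dual of $F$ (with constant $\tfrac nN$, consistent with Remark~\ref{rem3point2}), and in particular that $F$ is a $1$-uniform frame, which is exactly the setting of Proposition~\ref{prop3point2}.

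Finally, applying Proposition~\ref{prop3point2} with $G = S_F^{-1}F$ yields $S_F^{-1}F \in \zeta_{\mathfrak{R}}^{(1),p}(F)$, i.e. the canonical dual is $1$-erasure spectrally optimal. I do not anticipate any real obstacle: the substance is entirely carried by Proposition~\ref{prop3point2}, and the corollary is essentially the remark that a uniform tight frame is the prototypical $1$-uniform frame whose associated $1$-uniform dual coincides with the canonical dual. The only point requiring a touch of care is tracking the normalization constant ($\|f_i\|^2 = An/N$, so that $\langle f_i, S_F^{-1}f_i\rangle = n/N$), which is routine.
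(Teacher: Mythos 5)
Your proposal is correct and follows essentially the same route as the paper: compute $\|f_i\|^2 = An/N$ for a uniform tight frame, deduce $\langle f_i, S_F^{-1}f_i\rangle = \tfrac{1}{A}\|f_i\|^2 = \tfrac{n}{N}$, and invoke Proposition~\ref{prop3point2}. Your extra trace computation justifying $\|f_i\|^2 = An/N$ is a harmless elaboration of a step the paper states without proof.
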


    \begin{proof}
Let \( F = \{f_i\}_{i=1}^N \) be a uniform tight frame for the Hilbert space \( \mathcal{H}_n \) with frame bound \( A \). Then, by the definition of a uniform tight frame, each frame element satisfies $\|f_i\| = \sqrt{\frac{An}{N}}, \quad \text{for all } i = 1, \dots, N.$ It follows that $\langle f_i, S_{F}^{-1}f_i \rangle = \left\langle f_i, \frac{1}{A} f_i \right\rangle = \frac{1}{A} \|f_i\|^2 = \frac{n}{N}.$ Hence, by Proposition\ref{prop3point2}, the canonical dual is a $1-$erasure spectrally optimal dual of \( F \).
\end{proof}

\begin{rem}
		If $F$ has the uniform redundancy distribution, then there exist a dual $G$  of $F$ satisfy $\langle f_i, g_i \rangle =  \frac{n}{N},\;$ for all $1 \leq i \leq N. $ Hence, by Proposition\ref{prop3point2}, $G$ is a $1-$erasure spectrally optimal dual of \( F \).
	\end{rem}

    \begin{lem}\label{lemma3point5}
    Let $\{a_i\}_{i=1}^s$ be a finite sequence of real numbers such that $\sum\limits_{i=1}^s a_i =0.$ Let $c>0$ be a positive real satisfying $|c+a_i|= |c+a_j|,$ for all $i \neq j.$ Then, $a_i =0,\, 1 \leq i \leq s.$
\end{lem}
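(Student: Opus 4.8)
The plan is to use the two hypotheses algebraically rather than through a case analysis. Write $t := |c+a_i|$ for the common modulus (independent of $i$ by assumption). First I would square the defining relation, $(c+a_i)^2 = t^2$ for every $i$, and sum over $i$; since $(c+a_i)^2 = c^2 + 2c\,a_i + a_i^2$ and the hypothesis $\sum_{i=1}^s a_i = 0$ annihilates the cross term, this collapses to
\[
\sum_{i=1}^s a_i^2 \;=\; s\,(t^2 - c^2).
\]
Because the left-hand side is a sum of squares this yields $t \ge c$ for free, and, more importantly, the conclusion $a_1=\cdots=a_s=0$ becomes \emph{equivalent} to the single scalar identity $t=c$. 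This step uses only the two stated hypotheses, and it reduces the entire lemma to proving that the common modulus equals $c$.

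Next I would expose what must be controlled. The relation $|c+a_i| = t$ forces each $a_i$ into one of two branches, $a_i = t-c$ or $a_i = -(t+c)$; letting $p$ and $q$ count the indices in these two branches, the zero-sum condition reads $(p-q)\,t = (p+q)\,c = s\,c$. The degenerate case $t=0$ is discarded at once, since it gives $a_i=-c$ for all $i$ and hence $\sum a_i = -sc \neq 0$; thus $t>0$ and the signed count satisfies $p-q = sc/t$. If one could show that the lower branch is empty, i.e. $q=0$, then every $a_i = t-c \ge 0$ would have zero sum and must vanish, forcing $t=c$; so the whole problem is to rule out the large negative value $a_i = -(t+c)$.

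The main obstacle is exactly this exclusion, and it is the genuine crux: the two hypotheses as written constrain only the signed count through $(p-q)\,t = sc$ and do \emph{not} by themselves prevent a few large negative entries from balancing several moderate positive ones. Indeed $s=3$, $c=1$, $(a_1,a_2,a_3)=(2,2,-4)$ satisfies $\sum a_i=0$ and $|c+a_i|=3$ for all $i$, with $t=3>c$ and the $a_i$ nonzero. The argument therefore closes only once one additionally knows that each $c+a_i$ is nonnegative (equivalently $t\le c$, whence $t=c$), a fact that must be drawn from the setting in which the lemma is invoked, where the constant modulus is not merely uniform across $i$ but is pinned to the value $c$ itself. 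I would accordingly structure the final proof so that the nonnegativity of $c+a_i$ is established first from that structural input, after which the clean identity $\sum_{i=1}^s a_i^2 = s(t^2-c^2)$ closes the argument immediately.
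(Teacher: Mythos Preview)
Your analysis is correct, and in fact it uncovers a genuine defect in the lemma as stated: the example $s=3$, $c=1$, $(a_1,a_2,a_3)=(2,2,-4)$ satisfies $\sum a_i=0$ and $|c+a_i|=3$ for every $i$, yet the $a_i$ are not all zero. The lemma is therefore false without an additional hypothesis. The paper's own proof contains precisely the gap you isolate: after deriving $(2m-s)r=sC$ it treats only the extreme cases $m=s$ and $m=0$ and asserts the conclusion, never ruling out the mixed regime $s/2<m<s$, which is where your counterexample lives.

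Where the two approaches diverge is in diagnosis. The paper's case split simply omits the problematic branch, whereas your identity $\sum_i a_i^2 = s(t^2-c^2)$ makes it transparent that the entire question is whether $t=c$, and your branch count $(p-q)t=sc$ shows exactly why the hypotheses alone cannot force this. Your proposed repair is also the right one: in the place where the lemma is invoked (the proof of Theorem~\ref{prop4point1}), the chain of equalities actually yields the stronger information $\frac{1}{|I_1|}\sum_{i\in I_1}|L+c_i|=L$ together with constancy of the $|L+c_i|$, so in fact $|L+c_i|=L$ for every $i$; in your notation this is $t=c$, and then your squared-sum identity gives $\sum a_i^2=0$ immediately. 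Thus the application survives, but the lemma should either be restated with the extra hypothesis $|c+a_i|=c$ (equivalently $c+a_i\ge 0$) for all $i$, or bypassed entirely in favor of the one-line argument above.
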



 \begin{proof}
Let  \( |C + a_i| = r \) for all \( i \in \{1, \dots, s\} \).  Hence, $ C + a_i = \pm r \quad \text{for each } i.$ This implies that for each \( i \), \( a_i \in \{r - C, -r - C\} \). Let us define $A = r - C, \quad B = -r - C,$ and suppose that exactly \( m \) of the \( a_i \)'s equal \( A \), and the remaining \( s - m \) equal \( B \). Then we have $\sum_{i=1}^s a_i = mA + (s - m)B = 0.$
Substituting \( A \) and \( B \), we get $(2m - s)r - sC = 0.$ But since \( r = |C + a_i| > 0 \) and \( C > 0 \), this equation only holds when \( 2m - s \neq 0 \). In that case, $r = \frac{sC}{2m - s}.$ So for \( r \) to be positive, \( 2m - s \) must be positive. Now suppose \( m = s \), i.e., all \( a_i = A = r - C \). Then $0=\sum_{i=1}^s a_i = s(r - C).$
This leads to, \( r = C \Rightarrow a_i = 0 \) for all \( i \). Similarly, if all \( a_i = B = -r - C \), then $0=\sum_{i=1}^s a_i = s(-r - C) = -s(r + C),$ which is impossible, since $s,r,C>0.$ Hence, the only possible way \( \sum_{i=1}^s a_i = 0 \) under the assumption \( |C + a_i| = \text{constant} \) is when \( a_i = 0 \) for all \( i \in \{1, \dots, s\} \). This completes the proof.
\end{proof}

The following theorem provides a sufficient condition under which the canonical dual frame is guaranteed to be the unique spectrally optimal dual for a one erasure. Let us define $L := \max \left\{ \left\| S_F^{-1/2} f_i \right\| : 1 \leq i \leq N \right\},\,I_1 := \left\{ i \in \{1, \dots, N\} : \left\| S_F^{-1/2} f_i \right\| = L \right\}, \quad I_2 := \{1, 2, \dots, N\} \setminus I_1$ and $H_j := \operatorname{span} \left\{ f_i : i \in I_j \right\}, \quad \text{for } j = 1, 2.$

\begin{thm}\label{prop4point1}
Let \( F = \{f_i\}_{i=1}^N \) be a frame for a Hilbert space \( \mathcal{H}_n \). Suppose that
\begin{itemize}
    \item[(i)] \( H_1 \cap H_2 = \{0\}, \)
    \item[(ii)] \( \{ f_i : i \in I_2 \} \) is linearly independent.
\end{itemize}
Then the canonical dual frame \( S_F^{-1} F \) is a  $1-$erasure spectrally optimal dual frame of \( F \).
\end{thm}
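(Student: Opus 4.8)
The plan is to mimic the structure of the proof of Theorem~\ref{thm3point2}, replacing the Frobenius-norm quantity $\|f_i\|\cdot\|S_F^{-1}f_i\|$ by the spectral-radius quantity $\langle f_i, S_F^{-1}f_i\rangle = \|S_F^{-1/2}f_i\|^2$. Let $G = \{g_i\}_{i=1}^N$ with $g_i = S_F^{-1}f_i + u_i$ be an arbitrary dual of $F$, where $\sum_{i=1}^N \langle f, u_i\rangle f_i = 0$ for all $f$. Using hypothesis (i), decompose this relation along $H_1$ and $H_2$ to get $\sum_{i\in I_1}\langle f,u_i\rangle f_i = 0$ and $\sum_{i\in I_2}\langle f,u_i\rangle f_i = 0$ separately; by hypothesis (ii) the second forces $u_i = 0$ for all $i\in I_2$. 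As in Theorem~\ref{thm3point2}, the condition $T_{F_1}^* T_{U_1} = 0$ (with $F_1 = \{f_i\}_{i\in I_1}$, $U_1 = \{u_i\}_{i\in I_1}$) yields $\sum_{i\in I_1}\langle S_F^{-1}f_i, u_i\rangle = \operatorname{tr}(S_F^{-1}T_{F_1}^* T_{U_1}) = 0$.

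Next I would record the key formula. By \eqref{equation2}, the $1$-erasure spectral error of any dual $H=\{h_i\}$ is $\mathrm{AE}_{\mathfrak{R}}^{(1),p}(F,H) = \big\{\frac1N\sum_i |\langle f_i,h_i\rangle|^p\big\}^{1/p}$, and for the canonical dual $\langle f_i, S_F^{-1}f_i\rangle = \|S_F^{-1/2}f_i\|^2$, so $\mathrm{AE}_{\mathfrak{R}}^{(1),p}(F,S_F^{-1}F) = \big\{\frac1N\sum_i \|S_F^{-1/2}f_i\|^{2p}\big\}^{1/p} \le \big\{\frac1N \cdot N L^{2p}\big\}^{1/p} = L^2$, with the sharper bound $\sum_{i\in I_1}\|S_F^{-1/2}f_i\|^{2p} = |I_1|L^{2p}$ and $\sum_{i\in I_2}\|S_F^{-1/2}f_i\|^{2p} \le |I_2|L^{2p}$. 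To show canonical optimality it suffices to show $\mathrm{AE}_{\mathfrak{R}}^{(1),p}(F,S_F^{-1}F) \le \mathrm{AE}_{\mathfrak{R}}^{(1),p}(F,G)$ for every dual $G$. Since $u_i=0$ on $I_2$, the error contributions from $I_2$ are identical for $G$ and the canonical dual, so it reduces to comparing the $I_1$-sums: I must show $\sum_{i\in I_1}|\langle f_i, S_F^{-1}f_i + u_i\rangle|^p \ge \sum_{i\in I_1}\|S_F^{-1/2}f_i\|^{2p} = |I_1|L^{2p}$. Writing $\langle f_i, S_F^{-1}f_i+u_i\rangle = L^2 + \langle f_i,u_i\rangle = L^2 + (\text{Re}\langle S_F^{-1}f_i,u_i\rangle\text{-type terms})$ — more precisely $\langle f_i, S_F^{-1}f_i\rangle = L^2$ for $i\in I_1$ and $\langle f_i,u_i\rangle =: \alpha_i + i\beta_i$ with $\sum_{i\in I_1}\alpha_i = \text{Re}\sum_{i\in I_1}\langle f_i,u_i\rangle$. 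Here I would use that $\sum_{i\in I_1}\langle S_F^{-1}f_i,u_i\rangle = 0$ together with the identity relating $\langle f_i,u_i\rangle$ and $\langle S_F^{-1}f_i,u_i\rangle$ — note $\langle f_i,u_i\rangle$ need not have zero sum, but $\langle S_F^{-1}f_i, u_i\rangle$ does; the cleanest route is to work in the transported frame $\{S_F^{-1/2}f_i\}$ where the canonical dual is itself, exactly as in the numerical-radius proof, so that the relevant perturbation terms have zero real-part sum.

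Concretely: set $v_i = S_F^{-1/2}u_i$; then $\{S_F^{-1/2}f_i\}$ is a (tight, in fact Parseval) frame with dual $\{S_F^{-1/2}f_i + v_i\}$, and $\langle f_i, g_i\rangle = \langle S_F^{-1/2}f_i, S_F^{-1/2}f_i + v_i\rangle = \|S_F^{-1/2}f_i\|^2 + \langle S_F^{-1/2}f_i, v_i\rangle$, with $\sum_{i\in I_1}\langle S_F^{-1/2}f_i, v_i\rangle = 0$ by the trace computation above. Thus with $L_i := \|S_F^{-1/2}f_i\|$ ($=L$ for $i\in I_1$) and $\gamma_i := \text{Re}\langle S_F^{-1/2}f_i, v_i\rangle$ we have $\sum_{i\in I_1}\gamma_i = 0$ and $|\langle f_i,g_i\rangle|^2 = (L^2+\gamma_i)^2 + (\text{Im part})^2 \ge (L^2+\gamma_i)^2$, so $|\langle f_i,g_i\rangle|^p \ge |L^2+\gamma_i|^p$. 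Applying convexity of $t\mapsto |t|^p$ ($p>1$) via Jensen to $\sum_{i\in I_1}|L^2+\gamma_i|^p \ge |I_1|\,|L^2 + \frac{1}{|I_1|}\sum\gamma_i|^p = |I_1|L^{2p}$ gives exactly the needed inequality, hence $\mathrm{AE}_{\mathfrak{R}}^{(1),p}(F,G) \ge \mathrm{AE}_{\mathfrak{R}}^{(1),p}(F,S_F^{-1}F)$, proving the canonical dual is $1$-erasure spectrally optimal. The main obstacle is the bookkeeping: correctly splitting the $I_1$ and $I_2$ contributions, verifying that the $I_2$ part is literally unchanged (not merely bounded) so the comparison is tight, and confirming that the zero-sum constraint needed for Jensen is on $\text{Re}\langle S_F^{-1/2}f_i, v_i\rangle$ rather than on the raw perturbation — which is why passing to the transported Parseval frame is the right move. (Note that unlike Theorem~\ref{thm3point2}, uniqueness is not claimed here, so Lemma~\ref{lemma3point5} and the forcing $u_i=0$ on $I_1$ are not needed for the stated conclusion, though they would give uniqueness when $p>1$ strictly.)
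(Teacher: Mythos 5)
Your overall strategy is the paper's: use (i) and (ii) to force $u_i=0$ for $i\in I_2$, isolate the $I_1$-sum, and bound it below by $|I_1|L^{2p}$ via a zero-trace condition plus convexity (the paper runs the same chain, phrased as a forced equality for an assumed optimal dual, and then adds uniqueness via Lemma~\ref{lemma3point5}, which you correctly observe is not needed for the stated conclusion). However, the one place where you deviate contains a genuine error. You assert that $\sum_{i\in I_1}\langle f_i,u_i\rangle$ ``need not have zero sum'' and for that reason pass to a transported Parseval frame; in fact this sum equals $\operatorname{tr}(T_{F_1}^*T_{U_1})$, which vanishes because $T_{F_1}^*T_{U_1}=0$ as an operator --- this is exactly the identity the paper invokes ($\sum_{i\in I_1}\langle f_i,u_i\rangle=\operatorname{tr}(T_{U_1}T_{F_1}^*)=0$), and it is precisely the zero-sum you need, since for $i\in I_1$ the error term is $\langle f_i,g_i\rangle=L^2+\langle f_i,u_i\rangle$. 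Moreover, your workaround is misconfigured: with $v_i=S_F^{-1/2}u_i$ the family $\{S_F^{-1/2}f_i+v_i\}$ is indeed a dual of the Parseval frame, but the identity $\langle f_i,g_i\rangle=\|S_F^{-1/2}f_i\|^2+\langle S_F^{-1/2}f_i,v_i\rangle$ is false, because $\langle S_F^{-1/2}f_i,v_i\rangle=\langle S_F^{-1}f_i,u_i\rangle$ while the true perturbation is $\langle f_i,u_i\rangle$. The correct transport of a dual under $S_F^{-1/2}$ is $g_i\mapsto S_F^{1/2}g_i$, i.e.\ $v_i=S_F^{1/2}u_i$, which preserves $\langle f_i,g_i\rangle$; only then does your $\gamma_i$ equal $\mathrm{Re}\,\langle f_i,u_i\rangle$ and your Jensen step apply to the actual error terms.

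Once this is repaired --- or the detour simply dropped in favor of the direct trace identity --- your argument is correct and coincides with the paper's: write $\langle f_i,u_i\rangle=c_i+id_i$, use $\sum_{i\in I_1}c_i=\sum_{i\in I_1}d_i=0$, note $|\langle f_i,g_i\rangle|^p\ge |L^2+c_i|^p$, and apply convexity (Theorem~\ref{lemma2point4}, or Jensen, together with the triangle inequality) to obtain $\sum_{i\in I_1}|\langle f_i,g_i\rangle|^p\ge |I_1|L^{2p}$; combined with the unchanged $I_2$-contribution this yields $\mathrm{AE}_{\mathfrak{R}}^{(1),p}(F,G)\ge \mathrm{AE}_{\mathfrak{R}}^{(1),p}(F,S_F^{-1}F)$ for every dual $G$. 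Your choice to prove this inequality for an arbitrary dual, rather than for an assumed optimizer as in the paper's sketch, is actually the cleaner logic for the statement as given, since it does not presuppose that the infimum is attained.
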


	 \noindent\textbf{Sketch of the Proof.}  
Let \( G = \{S_F^{-1}f_i + u_i\}_{i=1}^N \) be a $1-$erasure spectrally optimal dual of \( F \). Under the assumptions that \( H_1 \cap H_2 = \{0\} \) and \( \{f_i : i \in I_2\} \) is linearly independent, it follows that \( u_i = 0 \) for all \( i \in I_2 \) and \( \sum_{i \in I_1} \langle f_i, u_i \rangle = \mathrm{tr}(T_{U_1} T_{F_1}^*) = 0. \) Since \( G \) is spectrally optimal, we have
\begin{align}\label{eqnspec3point4}
		&\;\;\;\;\;\;\;\;\;\,	\mathrm{AE}_{\mathfrak{R}}^{(1),p} (F,G) \leq \mathrm{AE}_{\mathfrak{R}}^{(1),p} (F,S_{F}^{-1}F)\nonumber\\ 
		& \implies  \left\{\frac{1}{N} \sum\limits_{i \in I_1} \left|\langle f_i, g_i \rangle \right|^p + \frac{1}{N} \sum\limits_{i \in I_2} \left|\langle f_i, g_i \rangle \right|^p \right\}^{\frac{1}{p}}\leq \frac{1}{N} \left\{ \sum\limits_{i \in I_1} \left|\langle f_i, S_{F}^{-1}f_i \rangle \right|^p +  \sum\limits_{i \in I_2} \left|\langle f_i, S_{F}^{-1}f_i \rangle \right|^p \right\}^{\frac{1}{p}} \nonumber\\
        & \implies \sum\limits_{i \in I_1} \left| L+ \langle f_i, u_i \rangle \right|^p \leq \sum\limits_{i \in I_1} L^p =|I_1|L^p.
	\end{align}

Write \( \langle f_i, u_i \rangle = c_i + i d_i \) for real numbers \( c_i, d_i \). Then \( \sum_{i \in I_1} c_i = \sum_{i \in I_1} d_i = 0 \) from the trace condition above. Inequality \eqref{eqnspec3point4} becomes:
\[
\sum_{i \in I_1} \left( (L + c_i)^2 + d_i^2 \right)^{p/2} \leq |I_1| L^p.
\]
Now by Lemma \ref{lemma2point4}, we have:
\small
\begin{align*}
|I_1| L^p &\geq \sum_{i \in I_1} \left( (L + c_i)^2 + d_i^2 \right)^{p/2} \geq \sum_{i \in I_1} |L + c_i|^p \geq |I_1| \left( \frac{1}{|I_1|} \sum_{i \in I_1} |L + c_i| \right)^p \geq |I_1| \left( \frac{ | \sum_{i \in I_1} (L + c_i) | }{|I_1|} \right)^p = |I_1| L^p.
\end{align*}
Equality throughout implies that \( d_i = 0 \) and \( |L + c_i| \) is constant for all \( i \in I_1 \). Therefore by Lemma\ref{lemma3point5}, $c_i =0,$ for all $i \in I_1.$ Hence, the canonical dual is the unique $1-$erasure spectrally optimal dual of $F.$ \hfill$\blacksquare$\\~\\

 The following  proposition establishes a sufficient condition under which a dual frame that is optimal under the spectral radius measure is also optimal under the numerical radius measure.

\begin{prop}\label{prop6point1}
	Let $F$ be a tight frame for $\mathcal{H}_n.$ If $ S_{F}^{-1}F\in \zeta_{\mathfrak{R}}^{(1),p}(F),$ then $ S_{F}^{-1}F\in \zeta_{\mathcal{N}}^{(1),p}(F).$
	
\end{prop}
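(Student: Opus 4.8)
The plan is to compare the two average errors directly, exactly as was done in the proof of Proposition \ref{prop6point2}, but with the roles of the spectral and numerical radii exchanged. First I would fix a tight frame $F$ with frame bound $A$, so that $S_F^{-1}f_i = \frac{1}{A}f_i$ and $\|f_i\| = \sqrt{An/N}$ is constant; this makes $\langle f_i, S_F^{-1}f_i\rangle = \frac{1}{A}\|f_i\|^2 = \frac{n}{N}$ and $\|f_i\|\cdot\|S_F^{-1}f_i\| = \frac{1}{A}\|f_i\|^2 = \frac{n}{N}$ as well. Hence for the canonical dual the two per-index quantities agree: $|\langle f_i, S_F^{-1}f_i\rangle| = \frac{n}{N}$ and $\tfrac12\big(|\langle f_i, S_F^{-1}f_i\rangle| + \|f_i\|\|S_F^{-1}f_i\|\big) = \frac{n}{N}$, so
\[
\mathrm{AE}_{\mathfrak{R}}^{(1),p}(F, S_F^{-1}F) = \mathrm{AE}_{\mathcal{N}}^{(1),p}(F, S_F^{-1}F) = \frac{n}{N}.
\]

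Next I would use the hypothesis $S_F^{-1}F \in \zeta_{\mathfrak{R}}^{(1),p}(F)$, which says $\mathrm{AE}_{\mathfrak{R}}^{(1),p}(F) = \frac{n}{N}$, i.e. for every dual $G$ of $F$ one has $\mathrm{AE}_{\mathfrak{R}}^{(1),p}(F,G) \geq \frac{n}{N}$. Then for an arbitrary dual $G = \{g_i\}$ I would chain:
\[
\mathrm{AE}_{\mathcal{N}}^{(1),p}(F, S_F^{-1}F) = \frac{n}{N} \leq \mathrm{AE}_{\mathfrak{R}}^{(1),p}(F, G) = \left\{\frac{1}{N}\sum_{i=1}^N |\langle f_i, g_i\rangle|^p\right\}^{1/p} \leq \left\{\frac{1}{N}\sum_{i=1}^N \left(\frac{|\langle f_i, g_i\rangle| + \|f_i\|\|g_i\|}{2}\right)^p\right\}^{1/p} = \mathrm{AE}_{\mathcal{N}}^{(1),p}(F, G),
\]
where the last inequality is just the pointwise bound $|\langle f_i, g_i\rangle| \leq \tfrac12(|\langle f_i, g_i\rangle| + \|f_i\|\|g_i\|)$ together with monotonicity of $t \mapsto t^p$ and of the $\ell^p$-mean. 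Since $\mathrm{AE}_{\mathcal{N}}^{(1),p}(F, S_F^{-1}F) \leq \mathrm{AE}_{\mathcal{N}}^{(1),p}(F, G)$ holds for every dual $G$, we conclude $S_F^{-1}F \in \zeta_{\mathcal{N}}^{(1),p}(F)$.

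There is no real obstacle here; the only thing to be careful about is the direction of the inequalities — one must invoke optimality of the canonical dual in the \emph{spectral} sense (so that $\mathrm{AE}_{\mathfrak{R}}^{(1),p}(F,G)$ is bounded below by $\frac{n}{N}$) and then relate that to the numerical-radius error of $G$ from above via the pointwise inequality $\rho(E) \leq \omega(E)$ at each index, which in the $m=1$ case is precisely $|\langle f_i,g_i\rangle| \leq \tfrac12(|\langle f_i,g_i\rangle| + \|f_i\|\|g_i\|)$. Tightness of $F$ is used only to guarantee that the canonical dual achieves the common value $\frac{n}{N}$ in both measures; without it the two anchor values need not coincide and the chain breaks.
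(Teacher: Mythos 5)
Your overall strategy is exactly the paper's: show that for the canonical dual of a tight frame the numerical-radius error coincides with the spectral-radius error, then invoke the hypothesis $S_F^{-1}F\in\zeta_{\mathfrak{R}}^{(1),p}(F)$ and the pointwise bound $|\langle f_i,g_i\rangle|\leq \tfrac12\bigl(|\langle f_i,g_i\rangle|+\|f_i\|\,\|g_i\|\bigr)$ to pass to $\mathrm{AE}_{\mathcal{N}}^{(1),p}(F,G)$ for an arbitrary dual $G$. However, one step as written is wrong: from tightness alone you cannot conclude that $\|f_i\|=\sqrt{An/N}$ is constant, nor that the common anchor value is $\tfrac{n}{N}$. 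Tightness gives $\sum_i\|f_i\|^2=An$, but the individual norms of a tight frame need not be equal (e.g.\ $\{e_1,\tfrac{1}{\sqrt{2}}e_2,\tfrac{1}{\sqrt{2}}e_2\}$ is Parseval but not uniform), so your closing remark that ``tightness is used only to guarantee that the canonical dual achieves the common value $\tfrac{n}{N}$ in both measures'' conflates tight with uniform tight, and your chain, anchored at the number $\tfrac{n}{N}$, only proves the statement for uniform tight frames.

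The repair is small and brings you back to what the paper actually does: for a tight frame with bound $A$ one has, index by index, $|\langle f_i,S_F^{-1}f_i\rangle|=\tfrac{1}{A}\|f_i\|^2=\|f_i\|\,\|S_F^{-1}f_i\|$, hence
\[
\mathrm{AE}_{\mathcal{N}}^{(1),p}(F,S_F^{-1}F)=\left\{\frac{1}{N}\sum_{i=1}^N\frac{1}{A^p}\|f_i\|^{2p}\right\}^{1/p}=\mathrm{AE}_{\mathfrak{R}}^{(1),p}(F,S_F^{-1}F),
\]
with no claim about the numerical value of this quantity. Then for any dual $G$,
\[
\mathrm{AE}_{\mathcal{N}}^{(1),p}(F,S_F^{-1}F)=\mathrm{AE}_{\mathfrak{R}}^{(1),p}(F,S_F^{-1}F)\leq\mathrm{AE}_{\mathfrak{R}}^{(1),p}(F,G)=\left\{\frac{1}{N}\sum_{i=1}^N|\langle f_i,g_i\rangle|^p\right\}^{1/p}\leq\mathrm{AE}_{\mathcal{N}}^{(1),p}(F,G),
\]
which is the paper's chain and yields $S_F^{-1}F\in\zeta_{\mathcal{N}}^{(1),p}(F)$ for every tight frame, uniform or not. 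So the idea is right; only the unjustified uniformity assumption and the specific value $\tfrac{n}{N}$ need to be removed.
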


\begin{proof}
    Let F be a tight frame with tight bound $A.$ For any dual $G$  of $F,$ we have
\begin{align*}
    \mathrm{AE}_{\mathcal{N}}^{(1),p}(F, S_{F}^{-1}F) &=   \left\{\frac{1}{N} \sum\limits_{i=1}^N\left( \frac{|\langle S_{F}^{-1}f_i,f_i\rangle| + \|S_{F}^{-1}f_i\|\|f_i\|}{2}\right)^p\right\}^{\frac{1}{p}}\\&
    = \left\{\frac{1}{N} \sum\limits_{i=1}^N\dfrac{1}{A^p} \|f_i\|^{2p}\right\}^{\frac{1}{p}} \\&
    = \mathrm{AE}_{\mathfrak{R}}^{(1),p}(F, S_{F}^{-1}F) \\&
    \leq \mathrm{AE}_{\mathfrak{R}}^{(1),p}(F, G) \\&
    \leq \left\{\frac{1}{N} \sum \left|\langle f_i, g_i \rangle \right|^p\right\}^{\frac{1}{p}} \\&
    \leq \left\{\frac{1}{N} \sum\limits_{i=1}^N\left( \frac{|\langle g_i,f_i\rangle| + \|g_i\|\|f_i\|}{2}\right)^p\right\}^{\frac{1}{p}}\\&
    = \mathrm{AE}_{\mathcal{N}}^{(1),p}(F, G).
\end{align*}
 Hence, $ S_{F}^{-1}F\in \zeta_{\mathcal{N}}^{(1),p}(F).$    
  
\end{proof}

By Proposition\ref{prop6point1} and Proposition\ref{prop6point2} we can conclude the following:
\begin{cor}
	Let $F$ be a tight frame for $\mathcal{H}_n.$ If $ S_{F}^{-1}F\in \zeta_{\mathcal{R}}^{(1),p}(F),$ then  $ S_{F}^{-1}F\in \zeta_{\mathfrak{F}}^{(1),p}(F).$ 
	
\end{cor}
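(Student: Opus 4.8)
The plan is to obtain the statement as an immediate consequence of Propositions~\ref{prop6point1} and~\ref{prop6point2}, chaining the two implications. Both propositions assume exactly that $F$ is tight, and their conclusions dovetail: Proposition~\ref{prop6point1} moves membership of the canonical dual from the spectral-radius-optimal set $\zeta_{\mathfrak{R}}^{(1),p}(F)$ to the numerical-radius-optimal set $\zeta_{\mathcal{N}}^{(1),p}(F)$, while Proposition~\ref{prop6point2} moves it from $\zeta_{\mathcal{N}}^{(1),p}(F)$ to the Frobenius-optimal set $\zeta_{\mathfrak{F}}^{(1),p}(F)$. So the composite gives precisely the desired conclusion.

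Concretely, I would proceed in two steps. First, assume $S_F^{-1}F \in \zeta_{\mathfrak{R}}^{(1),p}(F)$ (reading the symbol $\zeta_{\mathcal{R}}^{(1),p}$ in the statement as $\zeta_{\mathfrak{R}}^{(1),p}$, i.e.\ the spectral-radius set). Since $F$ is tight, Proposition~\ref{prop6point1} applies and yields $S_F^{-1}F \in \zeta_{\mathcal{N}}^{(1),p}(F)$. Second, feed this into Proposition~\ref{prop6point2}, again using that $F$ is tight, to obtain $S_F^{-1}F \in \zeta_{\mathfrak{F}}^{(1),p}(F)$, which is the claim.

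There is essentially no obstacle beyond bookkeeping here: the only thing to check is that the hypothesis of each proposition is met at each stage, which is automatic since tightness of $F$ is assumed throughout and the output set of the first proposition is exactly the input set of the second. If a self-contained argument were preferred over the two-line citation, one could merge the displayed inequality chains in the proofs of Propositions~\ref{prop6point1} and~\ref{prop6point2}: for a tight frame with bound $A$, the quantity $\left\{\tfrac{1}{N}\sum_{i=1}^N \tfrac{1}{A^p}\|f_i\|^{2p}\right\}^{1/p}$ simultaneously equals $\mathrm{AE}_{\mathfrak{R}}^{(1),p}(F,S_F^{-1}F)$, $\mathrm{AE}_{\mathcal{N}}^{(1),p}(F,S_F^{-1}F)$, and $\mathrm{AE}_{\mathfrak{F}}^{(1),p}(F,S_F^{-1}F)$, while the pointwise bounds $|\langle f_i,g_i\rangle| \leq \tfrac{|\langle f_i,g_i\rangle| + \|f_i\|\,\|g_i\|}{2} \leq \|f_i\|\,\|g_i\|$ sandwich $\mathrm{AE}_{\mathfrak{R}}^{(1),p}(F,G) \leq \mathrm{AE}_{\mathcal{N}}^{(1),p}(F,G) \leq \mathrm{AE}_{\mathfrak{F}}^{(1),p}(F,G)$ for every dual $G$; hence minimality of the canonical dual under the spectral radius forces it under the numerical radius and then under the Frobenius norm.
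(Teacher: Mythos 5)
Your proposal is correct and is exactly the paper's argument: the corollary is stated there as an immediate consequence of chaining Proposition~\ref{prop6point1} (spectral-radius optimality of the canonical dual implies numerical-radius optimality, for tight $F$) with Proposition~\ref{prop6point2} (numerical-radius optimality implies Frobenius optimality), with $\zeta_{\mathcal{R}}^{(1),p}$ read as $\zeta_{\mathfrak{R}}^{(1),p}$. Your optional merged inequality chain is a valid self-contained rewriting of the same two proofs, not a different method.
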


\noindent The following proposition demonstrates a optimality criteria for the case of uniform Parseval frames.

\begin{prop}
Let \( F = \{f_i\}_{i=1}^N \) be a uniform Parseval frame for a finite-dimensional Hilbert space \( \mathcal{H}_n \). Then the canonical dual \(  S_F^{-1}F = F \in \zeta_{\mathfrak{F}}^{(1),p}(F) \cap \zeta_{\mathfrak{R}}^{(1),p}(F) \cap \zeta_{\mathcal{N}}^{(1),p}(F)\)  and
\(
\mathrm{AE}_{\mathfrak{F}}^{(1),p}(F) = \mathrm{AE}_{\mathfrak{R}}^{(1),p}(F) = \mathrm{AE}_{\mathcal{N}}^{(1),p}(F) = \frac{n}{N}.
\)
\end{prop}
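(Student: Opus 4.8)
This statement is essentially a corollary of the results already established, so the plan is to assemble it from them rather than argue from scratch. First I would record the two elementary facts about a uniform Parseval frame $F=\{f_i\}_{i=1}^N$: since $\sum_{i=1}^N\|f_i\|^2=\operatorname{tr}(S_F)=\operatorname{tr}(I)=n$ and all the $\|f_i\|$ are equal, one has $\|f_i\|^2=n/N$ for every $i$; and since $S_F=I$, the canonical dual is $S_F^{-1}F=F$, with $\langle f_i,f_i\rangle=n/N$ for all $i$. In particular $F$ is a $1$-uniform frame, it is tight with bound $1$, and it is its own $1$-uniform dual.

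Next I would dispatch the spectral-radius part: by Proposition~\ref{prop3point2}, a $1$-uniform dual of the $1$-uniform frame $F$ is $1$-erasure spectrally optimal, so $F\in\zeta_{\mathfrak{R}}^{(1),p}(F)$; combining the lower bound \eqref{eqn3.3} with the direct evaluation
\[
\mathrm{AE}_{\mathfrak{R}}^{(1),p}(F,F)=\Bigl\{\tfrac1N\sum_{i=1}^N\bigl(\tfrac nN\bigr)^p\Bigr\}^{1/p}=\tfrac nN
\]
gives $\mathrm{AE}_{\mathfrak{R}}^{(1),p}(F)=n/N$. Because $F$ is tight with bound $1$, Proposition~\ref{prop6point1} then yields $S_F^{-1}F\in\zeta_{\mathcal{N}}^{(1),p}(F)$ and Proposition~\ref{prop6point2} yields $S_F^{-1}F\in\zeta_{\mathfrak{F}}^{(1),p}(F)$; alternatively, the numerical-radius conclusion can be quoted directly from the earlier proposition that computes $\mathrm{AE}_{\mathcal{N}}^{(1),p}(F)=n/N$ for uniform Parseval frames, and the Frobenius conclusion from the lower bound \eqref{eqn4.6} together with $\mathrm{AE}_{\mathfrak{F}}^{(1),p}(F,F)=\{\tfrac1N\sum_i\|f_i\|^{2p}\}^{1/p}=n/N$.

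Finally I would note that the three optimal values coincide: the inequality chains inside the proofs of Propositions~\ref{prop6point1} and~\ref{prop6point2} show that for a tight frame $\mathrm{AE}_{\mathfrak{R}}^{(1),p}(F,S_F^{-1}F)=\mathrm{AE}_{\mathcal{N}}^{(1),p}(F,S_F^{-1}F)=\mathrm{AE}_{\mathfrak{F}}^{(1),p}(F,S_F^{-1}F)$, and since $S_F^{-1}F$ has just been shown to be optimal for each of the three measures, each optimal value equals this common number, namely $n/N$. I do not expect a genuine obstacle here; the only point needing a moment's attention is the bookkeeping of hypotheses, i.e. checking that a uniform Parseval frame qualifies as a ``UNTF'', a ``uniform tight frame'' of bound $1$, and a ``$1$-uniform frame'' in the senses used by the cited statements, which it does by the observations of the first paragraph.
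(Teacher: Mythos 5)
Your argument is correct, and it reaches the conclusion by a genuinely different route than the paper. The paper's proof is three parallel direct verifications: it computes $\mathrm{AE}_{\mathfrak{F}}^{(1),p}(F,F)=\bigl(\tfrac1N\sum_i\|f_i\|^{2p}\bigr)^{1/p}=\tfrac nN$, invokes the lower bound \eqref{eqn4.6} to conclude $F\in\zeta_{\mathfrak{F}}^{(1),p}(F)$, and then says ``similarly'' for the spectral-radius and numerical-radius cases (i.e.\ the analogous evaluations together with the lower bounds \eqref{eqn3.3} and the one from the numerical-radius section). Your primary route instead establishes only the spectral case directly, via Proposition~\ref{prop3point2} applied to $F$ as its own $1$-uniform dual, and then transfers optimality through the tight-frame implications of Proposition~\ref{prop6point1} (spectral $\Rightarrow$ numerical) and Proposition~\ref{prop6point2} (numerical $\Rightarrow$ Frobenius); your ``alternative'' remarks essentially reproduce the paper's direct computations. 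Both arguments are sound. The paper's version is self-contained and symmetric in the three measures; yours economizes by reusing the transfer propositions and makes transparent why the three optimal values must coincide, since the equality chains in Propositions~\ref{prop6point1} and~\ref{prop6point2} show that for a tight frame the three error measures literally agree at the canonical dual, namely $\bigl\{\tfrac1N\sum_i A^{-p}\|f_i\|^{2p}\bigr\}^{1/p}=\tfrac nN$ here. Your explicit check that a uniform Parseval frame satisfies the hypotheses of the cited statements (tight with bound $1$, $\|f_i\|^2=n/N$, hence $\langle f_i,f_i\rangle=n/N$, so $F$ is a $1$-uniform dual of itself) is the right bookkeeping and matches how the paper itself applies Proposition~\ref{prop3point2} to uniform tight frames.
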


\begin{proof}
Since \( F \) is a uniform Parseval frame, we have \( \|f_i\| = \sqrt{n/N} \) for all \( i \). By definition,$\mathrm{AE}_{\mathfrak{F}}^{(1),p}(F,F) = \left( \frac{1}{N} \sum_{i=1}^N \|f_i\|^{2p} \right)^{1/p} = \left( \frac{1}{N} \cdot N \cdot \left( \frac{n}{N} \right)^p \right)^{1/p} = \frac{n}{N}.$ Hence, \( F \in \zeta_{\mathfrak{F}}^{(1),p}(F) \), by \eqref{eqn4.6}. Similarly we can prove that $ S_F^{-1}F \in \zeta_{\mathfrak{R}}^{(1),p}(F)$ and $ S_F^{-1}F \in \zeta_{\mathcal{N}}^{(1),p}(F)$
Therefore, \( S_F^{-1}F = F \in \zeta_{\mathfrak{F}}^{(1),p}(F) \cap \zeta_{\mathfrak{R}}^{(1),p}(F) \cap \zeta_1^{(p)}(F) \).

\noindent Since the canonical dual achieves the minimum in all three cases, the result follows.
\end{proof}

\begin{example}
Let $F = \{f_1, f_2, f_3\}$ be a frame for $\mathbb{C}^2$, where
\(
f_1 = \begin{bmatrix} 1 \\ 0 \end{bmatrix}, \quad
f_2 = \begin{bmatrix} 0 \\ 1 \end{bmatrix}, \quad
f_3 = \begin{bmatrix} 1 \\ 1 \end{bmatrix}.
\)
\end{example}

\noindent
The frame operator $S_F = \sum_{i=1}^3 f_i f_i^*$ for $F$ is given by $S_F = \begin{bmatrix} 2 & 1 \\ 1 & 2 \end{bmatrix}, \quad \text{so} \quad S_F^{-1} = \frac{1}{3} \begin{bmatrix} 2 & -1 \\ -1 & 2 \end{bmatrix}.$
Therefore, the canonical dual of $F$ is \(S_F^{-1}F = \left\{
\begin{bmatrix} \frac{2}{3} \\[2pt] -\frac{1}{3} \end{bmatrix},\;
\begin{bmatrix} -\frac{1}{3} \\[2pt] \frac{2}{3} \end{bmatrix},\;
\begin{bmatrix} \frac{1}{3} \\[2pt] \frac{1}{3} \end{bmatrix}
\right\}. \)  Any dual frame $G = \{g_i\}_{i=1}^3$ of $F$ can be written as:
\(
G = \left\{
\begin{bmatrix} \frac{2}{3} \\[2pt] -\frac{1}{3} + \alpha \end{bmatrix},\;
\begin{bmatrix} -\frac{1}{3} + \alpha \\[2pt] \frac{2}{3} \end{bmatrix},\;
\begin{bmatrix} \frac{1}{3} - \alpha \\[2pt] \frac{1}{3} - \alpha \end{bmatrix}
\right\}, \quad \alpha \in \mathbb{C}.
\) Using the definition of spectral error under one erasure, we compute: $$\mathrm{AE}_{\mathfrak{R}}^{(1),p}(F, S_F^{-1}F) = \left( \frac{1}{3} \sum_{i=1}^3 \left| \langle f_i, S_F^{-1}f_i \rangle \right|^p \right)^{1/p} = \frac{2}{3}.$$
Hence, $S_F^{-1}F \in \zeta_{\mathfrak{R}}^{(1),p}(F).$

Now, consider the Frobenius-norm based error. One can calculate:
\[
\mathrm{AE}_{\mathfrak{F}}^{(1),2}(F, S_F^{-1}F) = \left( \frac{1}{3} \sum_{i=1}^3 \|f_i\|^2 \|S_F^{-1} f_i\|^2 \right)^{1/2} = \sqrt{\frac{14}{27}} \approx 0.72.
\]
However, taking $\alpha = 0.05$ in the above parametrized dual $G$, we get $\mathrm{AE}_{\mathfrak{F}}^{(1),2}(F, G) \approx 0.435 .$ Therefore, $S_F^{-1}F \notin \zeta_{\mathfrak{F}}^{(1),2}(F).$

\noindent
This example clearly shows that the canonical dual need not be Frobenius-optimal for a non-tight frame, even though it remains optimal under the spectral radius criterion.

         \vspace{1cm}

	\noindent
	{\bf Acknowledgments:} 
	The authors are grateful to the Mohapatra Family Foundation and the College of Graduate Studies of the University of Central Florida for their support during this research. This work is also partially supported by NSF grant DMS-2105038.\\
    
    \noindent
	{\bf Data availability: } 
	The authors declare that no data has been used for this research.

\noindent
	{\bf Ethical approval: }  Not applicable.\\
\noindent
	{\bf Informed consent: } Not applicable.\\
\noindent
	{\bf Conflict of Interest:} Not Applicable.

		 \bibliographystyle{amsplain}

\end{document}